\def\csname cor1\endcsname#1{{\color{black} #1}}
\newcommand{\cor}[2]{\csname cor#1\endcsname{#2}}
\newtheorem{theorem}{Theorem}[section]
\newtheorem{lemma}[theorem]{Lemma}
\newtheorem{corollary}[theorem]{Corollary}
\newtheorem{proposition}[theorem]{Proposition}
\newtheorem{conjecture}[theorem]{Conjecture}
\theoremstyle{definition}
\newtheorem{definition}[theorem]{Definition}
\theoremstyle{remark}
\newtheorem{example}[theorem]{Example}
\newtheorem{remark}[theorem]{Remark}
\numberwithin{equation}{section}
\renewcommand{\subsection}{\@startsection{subsection}{2}%
  {0pt}{.7\linespacing\@plus\linespacing}{.5\linespacing}%
  {\normalfont\bfseries}}
\renewcommand{\p@enumii}{}
\newcommand{\levdist}{1.2cm}
\DeclareMathOperator{\card}{card}
\def\<#1>{\langle #1 \rangle}
\newcommand{\acr}{\newline\indent}
\author{Oleksiy Dovgoshey}
\address{\textbf{Oleksiy Dovgoshey}\acr
Institute of Applied Mathematics and Mechanics
of the NAS of Ukraine, \acr Sloviansk, Ukraine \newline\indent
and Department of Mathematics and Statistics, University of Turku,  Finland.}
\email{oleksiy.dovgoshey@gmail.com, oleksiy.dovgoshey@utu.fi}
\author{Valentino Vito}
\address{\textbf{Valentino Vito}\acr
Faculty of Computer Science, Universitas Indonesia, Depok, Indonesia}
\email{valentino.vito11@ui.ac.id}
\title[Labeled rays]{Totally bounded ultrametric spaces \\generated by labeled rays}
\subjclass[2020]{Primary 54E35, Secondary 54E4}
\keywords{Cauchy sequence, labeled tree, ray, totally bounded ultrametric space}
\begin{document}

\begin{abstract}
We will say that an infinite tree $T$ is almost a ray if $T$ is the union of a ray and a finite tree. Let $l$ be a non-degenerate labeling of the vertex set $V$ of almost a ray $T$ and let $d_l$ be the corresponding ultrametric on $V$. It is shown that the ultrametric space $(V, d_l)$ is totally bounded iff this space contains an infinite totally bounded subspace. We also prove that the last property characterizes the almost rays.
\end{abstract}

\maketitle

\section{Introduction}

\cor{1}{In 2001, I.~M.~Gelfand raised the following problem: Describe, up to isometry, all finite ultrametric spaces using graph theory~\cite{Lem2001}. A representation of these spaces by monotone trees was found by V.~Gurvich and M.~Vyalyi in~\cite{GV2012DAM}. A simple geometric interpretation of Gurvich--Vyalyi representing trees was obtained in \cite{PD2014JMS}. This allows us to use the Gurvich--Vyalyi representation in various problems associated with finite ultrametric spaces. In particular, it gives a graph-theoretic interpretation of the Gomory--Hu inequality \cite{DPT2015} and characterization of finite ultrametric spaces of maximal rigidity~\cite{DPT2017FPTA}. Extremal properties of finite ultrametric spaces having strictly \(n\)-ary representing trees have been described in~\cite{DP2020pNUAA}.

The Gurvich--Vyalyi trees form a subclass of finite trees endowed with vertex labeling. J.~Gallian writes in~\cite{GalTEJoC2019} that over 200 graph labelings techniques have been studied in over 2800 paper during the past 50 years. We only note that, in almost all studies of trees with labeled vertices, it is assumed that the trees are finite. The infinite trees endowed with positive edge labelings are known as \(R\)-trees~\cite{Ber2019SMJ}. Some interrelations between finite subtrees of \(R\)-trees and Gurvich--Vyalyi trees are found in~\cite{Dov2020TaAoG}.}

Motivated by above mentioned results and some interconnections between topological and combinatorial  properties  of infinite graphs \cite{BD2006CPC, BDS2005JGT, BS2010CPC, DIESTEL2006846, DIESTEL20111423, Deistel2017, DK2004EJC, DP2017JCT, DS2011AM, DS2011TIA, DS2012DM, DMV2013AC, DP2013SM} Mehmet K\"{u}\c{c}\"{u}kaslan and the first author of the paper introduced in return the ultrametric spaces generated by vertex labelings of infinite trees \cite{DK2022AC}.

\cor{1}{Below we present connections between the combinatorial properties of infinite trees and the metric properties of ultrametric spaces generated by labeling the vertices on these trees. In particular, we show that infinite trees \(T\) that are almost rays are completely characterized by special properties of the totally bounded ultrametric spaces generated by vertex labelings on \(T\).}

In what follows, we denote by $\mathbb{R}^{+}$ the half-open interval $%
[0,\infty )$.

A \textit{metric} on a set $X$ is a function $d\colon X\times X\rightarrow
\mathbb{R}^{+}$ such that for all $x$, $y$, $z\in X$

\begin{enumerate}
\item $d(x,y)=d(y,x)$,

\item $(d(x,y)=0)\Leftrightarrow (x=y)$,

\item $d(x,y)\leq d(x,z) + d(z,y)$.
\end{enumerate}

A metric space $(X,d)$ is \emph{ultrametric} if the \emph{strong triangle
inequality}
\begin{equation*}
d(x,y)\leq \max \{d(x,z),d(z,y)\}
\end{equation*}
holds for all $x$, $y$, $z\in X$. In this case, the function $d$ is called \textit{an ultrametric} on $X$.

\begin{definition}\label{d1.3}
\cor{1}{A \emph{labeled tree} is a tree in which each vertex is assigned a single label.}
\end{definition}

Below we will consider only the non-negative real-valued vertex labelings \cor{1}{defined on arbitrary finite or infinite trees}.

Let $T=T(l)$ be a labeled tree with labeling \(l\colon V(T) \to \mathbb{R}^{+}\). Following~\cite{Dov2020TaAoG}, we define a mapping $d_l \colon V(T) \times
V(T) \to \mathbb{R}^{+}$ as
\begin{equation}  \label{e1.1}
d_l(u, v) =
\begin{cases}
0, & \text{if } u = v, \\
\max\limits_{w \in V(P)} l(w), & \text{otherwise},%
\end{cases}%
\end{equation}
where $P$ is the path joining $u$ and $v$ in $T$.

\begin{theorem}[\cite{DK2022AC}]\label{t1.4}
Let $T = T(l)$ be a labeled tree. Then the
function $d_l$ is an ultrametric on $V(T)$ if and only if the inequality
\begin{equation}\label{t1.4_eq1}
\max\{l(u), l(v)\} > 0
\end{equation}
holds for every edge $\{u, v\}$ of $T$.
\end{theorem}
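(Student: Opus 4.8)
The plan is to separate the metric axioms into those that hold for every labeling and the single axiom whose validity is governed by the edge condition \eqref{t1.4_eq1}. First I would note the easy facts: symmetry $d_l(u,v)=d_l(v,u)$ holds automatically, since the path joining $u$ and $v$ in the tree coincides with the path joining $v$ and $u$; non-negativity $d_l(u,v)\geq 0$ holds because $l$ takes values in $\mathbb{R}^{+}$; and $u=v \Rightarrow d_l(u,v)=0$ is immediate from \eqref{e1.1}. Thus only two things require attention: the strong triangle inequality, and the implication $(d_l(u,v)=0)\Rightarrow(u=v)$.

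Next I would show that the strong triangle inequality holds for \emph{every} labeling $l$, so that it contributes nothing to the equivalence. Writing $P_{xy}$ for the path joining $x$ and $y$, the combinatorial heart of the matter is the inclusion
\[
V(P_{uv}) \subseteq V(P_{uw}) \cup V(P_{wv}),
\]
valid for any triple $u,v,w$ in a tree. Granting it, and using that $d_l(u,v)=\max_{x\in V(P_{uv})}l(x)$ for distinct points, one obtains at once
\[
d_l(u,v) = \max_{x \in V(P_{uv})} l(x) \leq \max_{x \in V(P_{uw}) \cup V(P_{wv})} l(x) = \max\{d_l(u,w), d_l(w,v)\},
\]
which is the strong triangle inequality (the cases where two of $u,v,w$ coincide being trivial).

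It then remains to prove that $(d_l(u,v)=0)\Rightarrow(u=v)$ is equivalent to \eqref{t1.4_eq1}. For the ``only if'' direction of the theorem, assume $d_l$ is an ultrametric and fix any edge $\{u,v\}$; since the path joining the endpoints of an edge is the edge itself, $d_l(u,v)=\max\{l(u),l(v)\}$, which is positive because $u\neq v$, giving \eqref{t1.4_eq1}. For the ``if'' direction, assume \eqref{t1.4_eq1} and take distinct $u,v$; the path $P_{uv}$ contains at least one edge $\{a,b\}$, whence $\max_{x\in V(P_{uv})}l(x)\geq\max\{l(a),l(b)\}>0$, so $d_l(u,v)>0$. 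Combined with the preceding paragraph, this shows $d_l$ is an ultrametric exactly when \eqref{t1.4_eq1} holds.

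The main obstacle is establishing the inclusion $V(P_{uv})\subseteq V(P_{uw})\cup V(P_{wv})$; everything else is a direct unwinding of definition \eqref{e1.1}. I expect to justify it using the tree structure: either via the unique median vertex $m$ of the triple $u,v,w$, decomposing $P_{uv}=P_{um}\cup P_{mv}$ with $P_{um}\subseteq P_{uw}$ and $P_{mv}\subseteq P_{wv}$, or equivalently by noting that each internal vertex of $P_{uv}$ is a cut vertex separating $u$ from $v$ and must therefore lie on the walk obtained by concatenating $P_{uw}$ and $P_{wv}$. This is the one place where the absence of cycles and the uniqueness of paths in a tree are genuinely used.
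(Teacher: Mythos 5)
Your proposal is correct; note that the paper itself gives no proof of this theorem, importing it with a citation to \cite{DK2022AC}, so there is no in-paper argument to compare against. Your decomposition is the standard one and sound at every step: symmetry, non-negativity, and $u=v\Rightarrow d_l(u,v)=0$ are immediate; the strong triangle inequality holds for \emph{every} labeling via the inclusion $V(P_{uv})\subseteq V(P_{uw})\cup V(P_{wv})$ (justified either by the median vertex or by the fact that every internal vertex of the unique $u$--$v$ path is a cut vertex, hence lies on any $u$--$v$ walk); and definiteness is exactly equivalent to \eqref{t1.4_eq1}, since for an edge $\{u,v\}$ the joining path is the edge itself, while for arbitrary distinct $u,v$ the path contains an edge whose label maximum bounds $d_l(u,v)$ from below.
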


We will say that a labeling $l\colon V(T) \to \mathbb{R}^{+}$ is \emph{non-degenerate} if \eqref{t1.4_eq1} is valid for every edge $\{u,v\}$ of $T$.

\begin{definition}\label{def2.4}
Let $T$ be a tree and let $A$ be a nonempty subset of $V (T)$. A subtree $H_A$ of the tree $T$ is the \emph{\cor{1}{convex hull}} of $A$ if $A \subseteq V (H_A)$ and, for every subtree $T^{*}$ of $T$, the tree $H_A$ is a subtree of $T^{*}$ whenever $A \subseteq V (T^{*})$.
\end{definition}

\begin{remark}
\cor{1}{
It should be noted that the \cor{1}{convex hull} of \(A\) exists and is unique for any nonempty \(A \subseteq V(T)\) (see Lemma~\ref{lem4.1} in the next section of the paper). The concept of the \cor{1}{convex hull} is well-known and studied by various mathematicians at least for the case of finite graphs~\cite{CMOP2005RCSiG, CJG2002CSuSGO, CWZ2002TCNoaG, Coo1971TCoTPP, FJ1986CiGaH, Gim2003SRotCNoaG, HN1981CiG, Kim2003ALBftCNoSG, Pel2013GCiG}.}
\end{remark}

The following conjecture was  formulated in \cite{DK2022AC}.

\begin{conjecture}\label{con1.7}
Let $T$ be a tree and let $(u_n)_{n\in \mathbb{N}}$ be a sequence of distinct vertices of $T$. Then, the following conditions are equivalent:

\begin{enumerate}
\item The \cor{1}{convex hull} of the range set of $(u_n)_{n\in\mathbb{N}}$ is almost a ray.

\item For every non-degenerate $l \colon V (T) \to \mathbb{R}^{+}$, the existence of a Cauchy subsequence of $(u_n)_{n\in \mathbb{N}}$ in $(V(T), d_l)$ implies that $(u_n)_{n\in \mathbb{N}}$ is also a Cauchy sequence in $(V(T), d_l)$.
\end{enumerate}
\end{conjecture}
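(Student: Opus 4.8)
The plan is to establish the two implications separately, after two reformulations valid in any ultrametric space (recall that $d_l$ is an ultrametric by Theorem~\ref{t1.4}). By the strong triangle inequality, a sequence is Cauchy in $(V(T),d_l)$ iff the distances between consecutive terms tend to $0$; and a sequence of \emph{distinct} points has a Cauchy subsequence iff its range contains an infinite totally bounded subset, whereas if the range fails to be totally bounded the sequence cannot be Cauchy. Writing $H$ for the convex hull of the range $W=\{u_n:n\in\NN\}$ and noting that the $T$-path between two vertices of $H$ stays inside $H$, all distances among the $u_n$ depend only on $l$ restricted to $V(H)$; so I shall reason inside $H$ and extend $l$ non-degenerately to $V(T)$ at the end.

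For (i)$\Rightarrow$(ii) I write $H=R\cup F$ with $R=r_0r_1r_2\cdots$ a ray and $F$ finite, and let $h(v)$ be the index of the first vertex $r_{h(v)}$ of $R$ on the path from $v$ towards the end of $R$. Since the $u_n$ are distinct and every sublevel $\{v:h(v)\le k\}$ is finite, $h(u_n)\to\infty$; and since $F$ is finite, $u_n\in R$ for all large $n$. If $(u_n)$ has a Cauchy subsequence $(u_{n_j})$ I first show $l(r_i)\to0$: were $l(r_i)\ge\varepsilon$ for infinitely many $i$, then, using $h(u_{n_j})\to\infty$, for every $J$ I could pick $j_0,j_1\ge J$ whose projections straddle such an $r_i$, forcing $d_l(u_{n_{j_0}},u_{n_{j_1}})\ge\varepsilon$ and contradicting Cauchyness. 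Granting $l(r_i)\to0$, for large $n$ both $u_n$ and $u_{n+1}$ lie on $R$ at indices tending to infinity, so the intervening ray segment carries arbitrarily small labels; hence $d_l(u_n,u_{n+1})\to0$ and $(u_n)$ is Cauchy.

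For (ii)$\Rightarrow$(i) I argue the contrapositive: given $H$ \emph{not} almost a ray, I build a non-degenerate $l$ under which $W$ contains an infinite totally bounded subset yet is not totally bounded. The structural input is the trichotomy that a tree which is not almost a ray must satisfy one of: (I) some vertex has infinite degree; (II) it is locally finite with exactly one end; (III) it has at least two ends. Indeed, König's lemma gives a ray when $H$ is locally finite, and a locally finite one-ended tree with only finitely many attached finite subtrees is already almost a ray, so in case (II) infinitely many such subtrees hang off a fixed ray at distinct, unbounded positions. Throughout I use that every component of $H$ minus a vertex must contain a point of $W$ (otherwise that component could be deleted, contradicting minimality of the convex hull), which locates a generator in each relevant branch.

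Each labeling follows one template: vanishing labels along a \emph{spine} create a clustering (totally bounded) family in $W$, while a barrier of labels bounded below keeps a second infinite family $1$-separated, the two being compatible because the unique path between members of one family never meets the other. In (I) take a vertex $c$ of infinite degree, set $l(c)=0$, pick generators in infinitely many distinct components of $H-c$, and label the components by $1$ (even index) or by $2^{-t}$ (odd index $t$). In (II) use the ray as spine, $l(r_i)=2^{-i}$, labelling the subtree attached at $r_{i_t}$ by $1$ for even $t$ and by $2^{-i_t}$ for odd $t$. In (III) fix a double ray $\cdots p_1p_0\cdots q_0q_1\cdots$ joining two ends, put $l$ equal to $2^{-j}$ on every vertex projecting to $p_j$ and equal to $1$ on the $q$-side, and choose generators projecting arbitrarily far out on each side. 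In every case the clustering family has mutual distances tending to $0$ while each separated member carries a label $\ge1$ on every issuing path; labelling all remaining vertices by $1$ secures non-degeneracy. I expect the real work to lie here rather than in any single estimate: one must verify that the trichotomy is exhaustive---most delicately, routing a finite core carrying infinitely many branches into case (I) through one infinite-degree vertex---and that the clustering distances genuinely tend to $0$, which is precisely why the spine must carry labels decreasing to $0$, since any constant positive barrier on the mutual paths would ruin total boundedness of that family.
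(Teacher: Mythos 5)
Your proposal is correct, and while its first half runs parallel to the paper, its second half takes a genuinely different route from the paper's proof of Theorem~\ref{th4.5}. For (i)\(\Rightarrow\)(ii) both arguments reduce, via the hull property and finiteness of the attached tree, to a tail of \((u_n)\) lying on the ray; but where the paper invokes Proposition~\ref{prop3.3} (which rests on Proposition~\ref{prop1.9} from \cite{DK2022AC} via a re-enumeration of the ray), you re-prove the needed fact directly: your straddling argument shows a Cauchy subsequence forces \(l(r_i)\to 0\) along the ray, whence consecutive distances of the full sequence vanish and Proposition~\ref{p4.5} applies. This makes the direction self-contained at the cost of essentially re-deriving Proposition~\ref{prop1.9}. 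For (ii)\(\Rightarrow\)(i) the paper also argues by contrapositive, but its case split is governed by how the range \(W=\{u_n\colon n\in\mathbb{N}\}\) meets a fixed ray \(R\): an infinite-degree vertex (its Case~1); \(S=W\setminus V(R)\) finite, which already makes \(H\) almost a ray (Case~2.1); \(S\) and \(I=W\cap V(R)\) both infinite, handled by labeling \(1/n\) on the ray and \(1\) off it (Case~2.2.1); and \(S\) infinite, \(I\) finite, handled by finding a ray \(R^{*}\) avoiding \(W\), showing it carries infinitely many vertices of degree at least \(3\) in \(H\), and alternating small and unit labels on the hanging components \(V_{n_k}\) (Case~2.2.2). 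Your trichotomy is instead by end structure --- infinite-degree vertex, locally finite one-ended, at least two ends --- which is exhaustive by Lemma~\ref{lem2.4}. Your case (I) coincides with the paper's Case~1; your case (II) reproduces the mechanism of the paper's Case~2.2.2, with the finite trees hanging at unbounded positions playing the role of the sets \(V_{n_k}\), and both proofs hinge on the same key lemma, which you state explicitly: every component of \(H-v\) meets \(W\) by minimality of the hull (this is exactly the paper's nonemptiness of \(W_k\)); your case (III), the double ray with labels \(2^{-j}\) on one side and \(1\) on the other, has no direct counterpart --- the paper dispatches a two-ended hull through 2.2.1 or 2.2.2 according to where \(W\) sits, whereas your labeling handles it uniformly regardless of \(W\)'s position. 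What your decomposition buys is conceptual cleanliness and a slick multi-ended case; what the paper's buys is avoiding ends entirely and working with a single ray throughout. Two small points: your flagged worry about ``routing a finite core carrying infinitely many branches into case (I)'' is vacuous, since finitely many core vertices carrying infinitely many branches pigeonhole into one vertex of infinite degree; and the one step you leave implicit in (III) --- that points of \(W\) project arbitrarily far out on \emph{both} sides of the double ray --- does hold, by Lemma~\ref{lem4.1}: each far vertex \(p_j\) lies on a path \(P[u,v]\) with \(u,v\in W\), forcing a point of \(W\) to project at or beyond \(p_j\).
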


The next result provided the starting point for Conjecture~\ref{con1.7}.

\begin{proposition}[\cite{DK2022AC}]\label{prop1.9}
Let \(R = R(l)\) be a labeled ray, $ R = (v_1, v_2, \ldots)$,
with non-degenerate labeling \(l \colon V(R) \to \mathbb{R}^{+}\). Then the sequence \((v_n)_{n \in \mathbb{N}}\) is a Cauchy sequence in \((V(R), d_l)\) if and only if this sequence contains a subsequence which is Cauchy in \((V(R), d_l)\).
\end{proposition}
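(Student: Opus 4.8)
The plan is to reduce the whole statement to the asymptotic behavior of the scalar sequence of labels $a_n := l(v_n)$. The starting point is an explicit formula for the ultrametric along a ray. Since the unique path in $R$ joining $v_m$ and $v_n$ (with $m < n$) is the finite subpath $v_m, v_{m+1}, \ldots, v_n$, definition~\eqref{e1.1} gives
\[
d_l(v_m, v_n) = \max\{a_m, a_{m+1}, \ldots, a_n\}, \qquad m < n.
\]
I would first use this to establish the key reduction: \emph{$(v_n)_{n\in\mathbb{N}}$ is Cauchy in $(V(R), d_l)$ if and only if $a_n \to 0$.} Indeed, if $a_n \to 0$, then for $\varepsilon > 0$ one picks $N$ with $a_k < \varepsilon$ for all $k \ge N$, so every block maximum with indices $\ge N$ is $< \varepsilon$ and $(v_n)$ is Cauchy; conversely, fixing $m = N$ in the Cauchy condition forces $\max\{a_N, \ldots, a_n\} < \varepsilon$, hence $a_n < \varepsilon$, for all $n \ge N$.

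The forward implication of the proposition is then immediate, since $(v_n)$ is a subsequence of itself (equivalently, every subsequence of a Cauchy sequence is Cauchy). So the substance lies in the reverse implication.

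For the reverse direction, suppose $(v_{n_k})_{k}$ with $n_1 < n_2 < \cdots$ is a Cauchy subsequence. The crucial observation is that along a ray the path joining two subsequence terms $v_{n_j}$ and $v_{n_k}$ (with $n_j < n_k$) still passes through \emph{every} intermediate vertex, so
\[
d_l(v_{n_j}, v_{n_k}) = \max_{n_j \le i \le n_k} a_i,
\]
and this maximum captures all the ``skipped'' labels, not merely those belonging to the subsequence. Consequently a Cauchy subsequence controls the entire tail of $(a_n)$: given $\varepsilon > 0$, choose $K$ with $d_l(v_{n_j}, v_{n_k}) < \varepsilon$ for all $j, k \ge K$; fixing $j = K$ and letting $k \to \infty$ (so that $n_k \to \infty$), each index $m \ge n_K$ eventually lies in some block $[n_K, n_k]$, whence $a_m < \varepsilon$. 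Therefore $a_n \to 0$, and by the reduction the full sequence $(v_n)$ is Cauchy.

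The main obstacle is really the single idea that makes the argument work: passing to a subsequence loses \emph{no} information about the intermediate labels, precisely because the metric is computed along the unique path of the ray. This is exactly the feature that breaks down for general trees, where a path between two subsequence terms need not traverse the skipped vertices, and it is what motivates the restriction to almost rays in Conjecture~\ref{con1.7}.
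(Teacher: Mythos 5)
Your proof is correct and complete: the explicit formula $d_l(v_m, v_n) = \max_{m \leqslant i \leqslant n} l(v_i)$ along the ray, the reduction of the Cauchy property to $l(v_n) \to 0$, and the observation that a Cauchy subsequence still controls all skipped labels together give a rigorous argument, and your closing remark correctly identifies why this fails in general trees. Note that the paper itself does not prove Proposition~\ref{prop1.9} --- it is quoted from \cite{DK2022AC} --- so there is no in-paper proof to compare against; your route is the natural one and is consistent with the paper's toolkit, since Proposition~\ref{p4.5} would yield the same scalar reduction via $d_l(v_n, v_{n+1}) = \max\{l(v_n), l(v_{n+1})\} \to 0$.
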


The paper is organized as follows. Sections~2 and 3 contain necessary definitions and lemmas. The main results are formulated and proven in Section~4. In particular, Theorem~\ref{th4.5} shows that  Conjecture~\ref{con1.7} is valid. The totally bounded ultrametric spaces generated by vertex labelings of almost rays are characterized in Theorem~\ref{th4.2}, that is the second main result of the paper. \cor{1}{In Section~\ref{sec5} we formulate two conjectures describing the combinatorial properties of infinite trees \(T\) for which the completion of \((V(T), d_l)\) has exactly one accumulation point.}

\section{Preliminaries. Trees}

The \textit{simple graph} is a pair $(V,E)$ consisting of a nonempty set $V$
and a set $E$ whose elements are unordered pairs $\{u,v\}$ of different
points $u$, $v\in V$. In what follows we will consider the simple graphs only. For a graph $G=(V,E)$, the sets $V=V(G)$ and $E=E(G)$
are called \textit{the set of vertices} and \textit{the set of edges},
respectively.  A graph $G$ is \emph{finite} if $V(G)$ is a finite set. A graph $H$ is, by definition, a \emph{subgraph} of a graph $G$ if the inclusions $V(H)\subseteq V(G)$ and $E(H)\subseteq E(G)$ are valid. In this case, we simply write $H\subseteq G$.

%The \emph{path} is a finite graph $P$ whose vertices can be numbered
%so that
%\begin{align*}
%V(P)& =\{x_{0},x_{1},\ldots ,x_{k}\},\ k\geqslant 1, \\
%E(P)& =\bigl\{\{x_{0},x_{1}\},\ldots ,\{x_{k-1},x_{k}\}\bigr\}.
%\end{align*}%
%In this case, we say that $P$ is a path joining $x_{0}$ and $x_{k}$ and write $P=P[x_0,x_k]$.

\cor{1}{In what follows we will use the standard definitions of paths and cycles, see, for example, \cite[Section~1.3]{Diestel2017}. It should be noted here that we consider only those paths \(P\) that have at least two vertices.} A graph $G$ is \emph{connected} if for every two distinct $u$, $v \in V(G)$ there is a path $P[u,v] \subseteq G$ joining $u$ and $v$.

%A finite graph $C$ with $\card V(G)\geqslant 3$ is a \emph{cycle} if there is an enumeration of its vertices without repetitions such that
%\begin{align*}
%V(C)& =\{v_{1},\ldots ,v_{n}\}, \\
%E(C)& =\{\{v_{1},v_{2}\},\ldots ,\{v_{n-1},v_{n}\},\{v_{n},v_{1}\}\}.
%\end{align*}

%\begin{definition} \label{d2.1}
A connected graph without cycles is called a \emph{tree}.
%\end{definition}

Let $T$ be a tree containing at least two vertices and let $v \in V(T)$. In Section~4 of the paper we will denote by $T - v$ the subgraph of $T$ induced by set  $ V(T) \setminus \{v\}$. Thus
% \begin{equation}\label{s2_eq2}
$$  V(T -v) = V(T) \setminus  \{v\}$$
% \end{equation}
and, for all $u_1, u_2 \in V(T -v)$,
%\begin{equation}\label{s2_eq3}
$$  \left( \{ u_1,u_2\} \in E(T -v) \right) \Leftrightarrow \left( \{ u_1, u_2\} \in E(T)\right)$$
% \end{equation}
hold.

Let $v$ be a vertex of a graph $G$. The \emph{degree} of $v$ is the cardinal number ${\rm deg}_G (v)$ of the set of all  vertices of $G$ which are adjacent to $v$.

\begin{definition}\label{def2.2}
A graph $G$ is \emph{locally finite} if ${\rm deg}_G(v)$ is finite for every vertex $v \in V(G)$.
\end{definition}

An infinite graph \(R\) of the form
\begin{equation}\label{s2_eq1}
V(R) = \{v_1, v_2 \ldots, v_n, v_{n+1}, \ldots\}, \ E(R) = \bigl\{\{v_1, v_2\}, \ldots \{v_n, v_{n+1}\}, \ldots\bigr\}
\end{equation}
is called a \emph{ray} \cite{Diestel2017}. It is clear that every ray is a tree.  If $R$ is a ray  and \eqref{s2_eq1} holds, then we write  $R = (v_1, v_2, \ldots)$ for simplicity.

If $\{G_i \colon i \in I\}$ is a family of subgraphs of a graph $G$, then, by definition, the union $\bigcup_{i \in I} G_i$ is a subgraph $G^{*}$ of $G$ such that
$$
V (G^{*}) = \bigcup_{i \in I} V (G_i) \quad \textrm{and} \quad E(G^{*}) = \bigcup_{i \in I} E(G_i).
$$

\begin{definition}\label{def2.2_2}
A graph $F$ is a \emph{forest} if there is a family $ \{ T_i \colon i \in I\}$ of trees $T_i \subseteq F$ such that
$$
F= \bigcup_{i \in I} T_i
$$
and $V(T_{i_1}) \cap V(T_{i_2}) = \emptyset$ whenever $i_1 \neq i_2$. In this case we say that the trees $T_i$ are \emph{components} of the forest $F$.
\end{definition}

\begin{example}\label{example_2.4}
If $T$ is a tree containing at least two vertices, then the graph $T - v$ is a forest for every $v \in V(T)$.
\end{example}

\begin{definition}\label{def2.3}
Let $T$ be an infinite tree. Then $T$ is \emph{almost} a \emph{ray} if there are a ray $R \subseteq T$ and a finite tree $T_0 \subseteq T$ such that
$$
T= T_0 \cup R.
$$
\end{definition}

\begin{lemma}[{\cite[Proposition 8.2.1]{Diestel2017}}]  \label{lem2.4}
Every infinite connected graph has a vertex of infinite degree
or contains a ray.
\end{lemma}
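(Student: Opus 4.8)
The plan is to establish the stated dichotomy in its contrapositive form: assuming that $G$ has no vertex of infinite degree, I would construct a ray inside $G$. So suppose every vertex of $G$ is of finite degree, i.e.\ that $G$ is locally finite in the sense of Definition~\ref{def2.2}. The idea is to build the desired ray $v_0, v_1, v_2, \dots$ one vertex at a time, never reusing a vertex, while maintaining an invariant that guarantees the construction can always be continued.

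The invariant I would carry along is the following: after choosing an initial segment $v_0 v_1 \cdots v_n$ of distinct vertices, the connected component $K_n$ of the subgraph of $G$ induced by $V(G) \setminus \{v_0, \dots, v_{n-1}\}$ that contains $v_n$ is infinite. For the base case I pick any vertex $v_0$; since $G$ is connected and infinite, $K_0 = G$ is infinite, so the invariant holds. For the inductive step, assume $K_n$ is infinite and delete $v_n$ from $K_n$. Because $K_n$ is connected and ${\rm deg}_G(v_n)$ is finite, the graph $K_n - v_n$ splits into at most ${\rm deg}_G(v_n)$ connected components, each of them joined to $v_n$ by an edge. Since these finitely many components together with $\{v_n\}$ exhaust the infinite vertex set of $K_n$, at least one component $K'$ must itself be infinite. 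I would then choose $v_{n+1}$ to be a neighbour of $v_n$ lying in $K'$; one checks that $K'$ is precisely the component of the subgraph induced by $V(G) \setminus \{v_0, \dots, v_n\}$ that contains $v_{n+1}$, so the invariant is restored at stage $n+1$.

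It then remains to observe that this process produces a genuine ray. By construction each $v_{n+1}$ is adjacent to $v_n$ and lies in a component of $G$ from which $v_0, \dots, v_n$ have been removed, so $v_{n+1} \notin \{v_0, \dots, v_n\}$; hence all the $v_i$ are pairwise distinct and the edges $\{v_n, v_{n+1}\}$ form a ray $R = (v_0, v_1, \dots) \subseteq G$, as required. I expect the only delicate point to be the pigeonhole step in the inductive argument, namely that deleting a finite-degree vertex from an infinite connected graph must leave an infinite component behind. This is exactly where local finiteness enters, and it is the graph-theoretic heart of König's infinity lemma that underlies the whole statement.
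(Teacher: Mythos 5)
Your argument is correct, but note that the paper offers no proof of this lemma at all: it is imported verbatim from \cite[Proposition~8.2.1]{Diestel2017}. In Diestel's book the proof runs through K\"onig's infinity lemma: assuming all degrees finite, one fixes a root $v_0$ and observes by induction that each distance class $D_n = \{v \colon \dist(v_0, v) = n\}$ is finite, while connectedness and infiniteness of $G$ force every $D_n$ to be nonempty; since each vertex of $D_{n+1}$ has a neighbour in $D_n$, K\"onig's lemma yields a ray. Your proof takes the other standard route --- the greedy construction that maintains the invariant ``the component $K_n$ of $G - \{v_0, \ldots, v_{n-1}\}$ containing $v_n$ is infinite'' --- and it is fully correct: the pigeonhole step (deleting a finite-degree vertex from an infinite connected graph leaves some infinite component, each component containing a neighbour of the deleted vertex) is sound, and you rightly flag and verify the one delicate point, namely that your chosen component $K'$ really is the component of $G - \{v_0, \ldots, v_n\}$ containing $v_{n+1}$, which is what keeps the induction honest. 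The trade-off between the two approaches is modest: Diestel's version factors the combinatorial core out into a reusable lemma (K\"onig), whereas yours is self-contained and effectively re-proves that core inline; your version also avoids needing the finiteness of distance classes, using local finiteness only at the single vertex being deleted at each step.
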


\cor{1}{The following lemma is well-known for finite trees.}

\begin{lemma}\label{lem4.1}
Let $T$ be a tree, let $S \subseteq V(T)$, $\card S \geqslant 2 $, and let $H=H_S$ be the \cor{1}{convex hull} of $S$ \cor{1}{of arbitrary order}. Then the equality
\begin{equation}\label{lem4.1_eq1}
H = \bigcup_{\genfrac{}{}{0pt}{2}{v \in S}{v \neq u}} P [u, v]
\end{equation}
holds for each \(u \in S\).
\end{lemma}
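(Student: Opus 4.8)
The plan is to fix $u \in S$, set $G := \bigcup_{v \in S,\, v \neq u} P[u,v]$, and verify that $G$ satisfies the three requirements characterizing the convex hull in Definition~\ref{def2.4}: that $G$ is a subtree of $T$, that $S \subseteq V(G)$, and that $G$ is a subtree of every subtree $T^{*}$ of $T$ with $S \subseteq V(T^{*})$. Since the object so characterized is unique (two convex hulls must each be a subtree of the other, hence equal), this establishes existence, uniqueness, and the formula $H_S = G$ at once; and as $u$ is arbitrary, \eqref{lem4.1_eq1} then holds for every $u \in S$.

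First I would check that $G$ is a subtree of $T$. As a union of subgraphs of $T$, the graph $G$ is a subgraph of $T$, and since $T$ contains no cycle, neither does $G$. For connectedness, observe that every path $P[u,v]$ passes through the common endpoint $u$, so any two vertices of $G$ are joined through $u$; hence $G$ is connected. A connected acyclic graph is a tree, so $G$ is a subtree of $T$. The inclusion $S \subseteq V(G)$ is immediate: each $v \in S \setminus \{u\}$ lies on $P[u,v]$, and $u$ lies on every such path (the hypothesis $\card S \geqslant 2$ guarantees that at least one such path exists).

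The main step is minimality. Let $T^{*}$ be any subtree of $T$ with $S \subseteq V(T^{*})$; I must show $P[u,v] \subseteq T^{*}$ for each $v \in S \setminus \{u\}$. Here I would invoke the uniqueness of paths in trees: since $T^{*}$ is itself a tree and $u, v \in V(T^{*})$, there is a unique path joining $u$ and $v$ inside $T^{*}$; but this path is also a path of $T$ joining $u$ and $v$, and $T$, being a tree, admits only one such path, namely $P[u,v]$. Consequently the $T^{*}$-path coincides with $P[u,v]$, so $P[u,v] \subseteq T^{*}$, and taking the union over $v \in S \setminus \{u\}$ yields $G \subseteq T^{*}$. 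By Definition~\ref{def2.4} this makes $G$ a convex hull of $S$, whence $H_S = G$.

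I expect the only real subtlety to lie in this minimality step, and specifically in making the uniqueness-of-paths argument airtight for a tree $T$ of arbitrary order: one must confirm that any two vertices of a tree (finite or infinite) are joined by a unique finite path, and that a path lying in the subtree $T^{*}$ is genuinely a path in $T$, so that the two must coincide. These facts are standard consequences of acyclicity and connectedness and transfer verbatim to the infinite setting, which is exactly the extension beyond the finite case noted before the statement. Once $G$ is identified as the convex hull, the independence of the right-hand side of \eqref{lem4.1_eq1} from the choice of $u$ follows from the uniqueness of $H_S$.
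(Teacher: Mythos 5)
Your proof is correct and takes essentially the same route as the paper's: both verify that $\bigcup_{v \in S,\, v \neq u} P[u,v]$ is a subtree of $T$ containing $S$ (connected through the common endpoint $u$, acyclic as a subgraph of $T$) and derive the two inclusions from uniqueness of paths in trees together with the minimality clause of Definition~\ref{def2.4}. The only difference is packaging: you verify the defining properties for $G$ directly and conclude $H_S = G$ by uniqueness of the hull (thereby also establishing existence, which the paper's remark in fact attributes to this lemma), whereas the paper takes $H$ as given and proves the inclusions $H \supseteq G$ and $G \supseteq H$ separately --- mathematically the same steps.
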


\begin{proof}
Let \(u\) be an arbitrary point of \(S\). Since the tree $H$ is a subtree of $T$ and $S \subseteq V(H)$, the inclusion
$$
H \supseteq \bigcup_{\genfrac{}{}{0pt}{2}{v \in S}{v \neq u}} P [u, v]
$$
holds. Furthermore, the graph  $ \bigcup_{\genfrac{}{}{0pt}{2}{v \in S}{v \neq u}} P [u, v]$ is a connected subgraph of $T$, and, consequently, it is a subtree of $T$. It is clear that
$$
S \subseteq V \left(\bigcup_{\genfrac{}{}{0pt}{2}{v \in S}{v \neq u}} P[u,v]\right),
$$
hence we have
$$
\bigcup_{\genfrac{}{}{0pt}{2}{v \in S}{v \neq u}} P[u,v] \supseteq H
$$
by Definition~\ref{def2.4}. Equality \eqref{lem4.1_eq1} follows.
\end{proof}

\section{Preliminaries. Totally bounded ultrametric spaces}

Let $(X, d)$ be a metric space. An \emph{open ball} with a \emph{radius} $r
> 0$ and a \emph{center} $c \in X$ is the set
\begin{equation*}
B_r(c) = \{x \in X \colon d(c, x) < r\}.
\end{equation*}
In what follows, we denote by \(\mathbf{B}_X\) the set of all open balls in \((X, d)\).

\begin{definition}\label{d1.1_1}
A metric space $(X,d)$ is called \emph{totally bounded} if for every  $r \in (0, \infty)$ there is a finite set $\{B_r(c_1), \ldots, B_r(c_n)\} \subseteq \mathbf{B}_X$ such that
%\begin{equation}\label{d1.1_eq1}
$$
X=\bigcup\limits_{i=1}^{n} B_r(c_i).
$$
%\end{equation}
\end{definition}

Recall also that a sequence $(x_n)_{n\in\mathbb{N}}$ of points of a metric space $(X,d)$ is a Cauchy sequence iff
\begin{equation}\label{s3_eq2}
\lim\limits_{\genfrac{}{}{0pt}{2}{n \to \infty}{m \to \infty}} d(x_n, x_m) = 0.
\end{equation}

In the next known proposition (see, for example, \cite[p.~4]{PerezGarcia2010} or \cite[Theorem 1.6, Statement (13)]{Comicheo2018}) we consider a comfortable ``ultrametric modification'' of the notion
of a Cauchy sequence.

\begin{proposition}\label{p4.5}
Let $(X, d)$  be an ultrametric space. A sequence $(x_n)_{n\in \mathbb{N}}$ of points of $X$ is a Cauchy sequence if and only if the limit relation
%\begin{equation}\label{p4.5_eq1}
$$
\lim_{n\to \infty} d(x_n, x_{n+1}) = 0
$$
%\end{equation}
holds.
\end{proposition}

We will also use the following proposition.
(See, for example, Theorem~7.8.2 in \cite{Sea2007}.)

\begin{proposition}\label{pr_7} A subset $A$ of a metric space $(X, d)$  is totally bounded if and only if every infinite sequence of points of $A$ contains a Cauchy subsequence.
\end{proposition}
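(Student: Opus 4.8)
The plan is to prove both implications directly, treating the stated equivalence as a sequential reformulation of the finite-cover definition of total boundedness (Definition~\ref{d1.1_1}).

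For the forward direction (total boundedness implies the sequential condition), I would fix an arbitrary infinite sequence $(x_n)_{n\in\mathbb{N}}$ of points of $A$ and construct a Cauchy subsequence by iterated extraction. For each $k\in\mathbb{N}$, total boundedness provides a finite cover of $A$ by balls of radius $2^{-k}$. Since finitely many balls contain all the (infinitely many) terms of the current subsequence, at least one ball must contain infinitely many of them, and I would pass to the subsequence lying in that ball. Carrying this out inductively yields a nested family of subsequences, the $k$-th of which lies entirely within a single ball of radius $2^{-k}$. Passing to the diagonal subsequence $(x_{n_k})_{k\in\mathbb{N}}$, any two indices $j,l\ge k$ give terms belonging to that same radius-$2^{-k}$ ball, whence $d(x_{n_j},x_{n_l})<2^{1-k}$; letting $k\to\infty$ shows the diagonal subsequence is Cauchy.

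For the reverse direction I would argue by contraposition: assuming $A$ is not totally bounded, I would exhibit a sequence with no Cauchy subsequence. Failure of total boundedness furnishes some $r>0$ for which no finite collection of radius-$r$ balls covers $A$. I would then greedily choose $x_1\in A$ and, having chosen $x_1,\dots,x_n$, pick $x_{n+1}\in A\setminus\bigcup_{i=1}^{n} B_r(x_i)$; this choice is always possible, since the $A$-centred balls $B_r(x_i)$ are a special case of the balls allowed in the definition, so finitely many of them can never cover $A$. By construction $d(x_i,x_j)\ge r$ for all $i\ne j$, so no subsequence of $(x_n)_{n\in\mathbb{N}}$ can satisfy the Cauchy condition \eqref{s3_eq2}, contradicting the hypothesis.

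The routine parts here are the two inductions; the step demanding the most care is the diagonal extraction in the forward direction, namely verifying that the diagonal subsequence inherits the ``common-ball'' property for all sufficiently large indices and is therefore genuinely Cauchy. A minor but worthwhile point is to keep track of whether ball centres are taken in $A$ or in $X$: the definition permits centres in $X$, which is harmless, since two points lying in a common radius-$r$ ball are within distance $2r$ regardless of where the centre sits, while in the reverse direction restricting to $A$-centred balls only strengthens the non-covering hypothesis.
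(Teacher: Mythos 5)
Your proof is correct in both directions: the pigeonhole-plus-diagonal extraction yields a genuinely Cauchy subsequence (for $j,l\geqslant k$ the nestedness puts both terms in one radius-$2^{-k}$ ball, giving $d(x_{n_j},x_{n_l})<2^{1-k}$), and the greedy construction of an $r$-separated sequence correctly refutes the sequential condition when total boundedness fails; your closing remark about ball centres taken in $X$ versus $A$ disposes of the only real pitfall, since a finite family of $A$-centred balls is a special case of the covers forbidden by the failure of total boundedness. Note, however, that the paper does not prove this proposition at all --- it is quoted as a known fact with a pointer to Theorem~7.8.2 of \cite{Sea2007} --- so there is no internal proof to compare against; your argument is the standard one found in such references and would serve as a self-contained substitute for the citation.
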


An important subclass of totally bounded metric spaces is the class of compact metric spaces.

\begin{definition}\label{d3.6}
Let \((X, d)\) be a metric space. A set \(A\) of \(X\) is \emph{compact} if every family \(\mathcal{F} \subseteq \mathbf{B}_X\) satisfying the inclusion
\[
A \subseteq \bigcup_{B \in \mathcal{F}} B
\]
contains a finite subfamily \(\mathcal{F}_0 \subseteq \mathcal{F}\) such that
\[
A \subseteq \bigcup_{B \in \mathcal{F}_0} B.
\]
\end{definition}

\begin{remark}\label{rem3.3}
Some new properties of totally bounded ultrametric spaces and compact ones were recently found in \cite{BDK2022TaAoG, Dov2024SUPFa, DK2022AC, DS2021TA, Ish2021pNUAA, MMWW2023TUGWD}.
\end{remark}

The next proposition gives us a generalization of Proposition~\ref{prop1.9}.

\begin{proposition}\label{prop3.3}
Let $R=(v_1, v_2, \ldots)$ be a ray and let a labeling $l\colon V(R) \to \mathbb{R}^{+}$ be non-degenerate. Let $(u_n)_{n \in \mathbb{N}}$  be a sequence  of points of $V(R)$ such that
\begin{equation}\label{pr3.3_eq1}
u_{n_1} \neq u_{n_2}
\end{equation}
whenever $n_1 \neq n_2$. Then $(u_n)_{n \in \mathbb{N}}$ is a Cauchy sequence in the ultrametric space $(V(R), d_l)$ iff $(v_m)_{m \in \mathbb{N}}$ is Cauchy in this space.
\end{proposition}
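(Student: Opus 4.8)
The plan is to translate both Cauchy conditions into the ultrametric criterion of Proposition~\ref{p4.5} and reduce them to a single scalar statement about the labels. Write $u_n = v_{\sigma(n)}$ for the index function $\sigma\colon\mathbb{N}\to\mathbb{N}$; since the $u_n$ are pairwise distinct by~\eqref{pr3.3_eq1}, the map $\sigma$ is injective, and hence $\sigma(n)\to\infty$ (each value $m$ is attained at most once, so only finitely many $n$ satisfy $\sigma(n)\le m$). Because $R$ is a ray, the path joining $v_i$ and $v_j$ with $i<j$ is the segment $v_i,\dots,v_j$, so~\eqref{e1.1} gives $d_l(v_i,v_j)=\max_{i\le k\le j}l(v_k)$. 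In particular $d_l(v_m,v_{m+1})=\max\{l(v_m),l(v_{m+1})\}$, so by Proposition~\ref{p4.5} the sequence $(v_m)_{m\in\mathbb{N}}$ is Cauchy if and only if $l(v_m)\to0$. Thus it suffices to prove that $(u_n)_{n\in\mathbb{N}}$ is Cauchy if and only if $l(v_m)\to0$.

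For the implication ``$l(v_m)\to0\Rightarrow(u_n)_{n\in\mathbb{N}}$ Cauchy'' I would use that $l(v_m)\to0$ is equivalent to $\sup_{k\ge m}l(v_k)\to0$. Setting $m_n=\min\{\sigma(n),\sigma(n+1)\}$, the distance formula yields $d_l(u_n,u_{n+1})\le\sup_{k\ge m_n}l(v_k)$, and since $m_n\to\infty$ (because $\sigma(n)\to\infty$), the right-hand side tends to $0$. Proposition~\ref{p4.5} then shows that $(u_n)_{n\in\mathbb{N}}$ is Cauchy. This direction is routine and does not use injectivity beyond $\sigma(n)\to\infty$.

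The converse is where the work lies, and the main obstacle is that $(u_n)_{n\in\mathbb{N}}$ need not enumerate all of $V(R)$, so one cannot directly read off the behavior of every label. I would argue by contradiction: assuming $(u_n)_{n\in\mathbb{N}}$ is Cauchy but $l(v_m)\not\to0$, fix $\varepsilon>0$ for which the set $K:=\{k\in\mathbb{N}\colon l(v_k)\ge\varepsilon\}$ is infinite. By Proposition~\ref{p4.5} (or directly by~\eqref{s3_eq2}) choose $N$ with $d_l(u_n,u_{n'})<\varepsilon$ for all $n,n'\ge N$. The explicit form of $d_l$ shows that the closed index interval between $\sigma(n)$ and $\sigma(n')$ then contains no element of $K$; equivalently, no element of $K$ lies between two of the indices $\sigma(n)$ with $n\ge N$. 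Hence all these indices lie in one and the same gap between consecutive elements of $K$. The crux is the elementary observation that, since $K$ is infinite, every such gap is a finite set, whereas $\{\sigma(n)\colon n\ge N\}$ is infinite by injectivity of $\sigma$ — a contradiction. This forces $l(v_m)\to0$ and completes the equivalence.
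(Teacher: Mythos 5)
Your proof is correct, and it takes a genuinely different route from the paper's. The paper first constructs a common subsequence of $(u_n)_{n\in\mathbb{N}}$ and $(v_m)_{m\in\mathbb{N}}$ (via an explicit well-ordering argument); assuming $(u_n)_{n\in\mathbb{N}}$ is Cauchy, it passes the Cauchy property to that common subsequence and then invokes Proposition~\ref{prop1.9} (the enumeration of a ray along itself is Cauchy iff it has a Cauchy subsequence), while the converse direction is handled, much as in your easy direction, by observing that the indices $m_n$ with $u_n=v_{m_n}$ tend to infinity. You never touch Proposition~\ref{prop1.9}: instead you exploit the linear structure of the ray to get the explicit formula $d_l(v_i,v_j)=\max_{i\le k\le j}l(v_k)$, reduce both Cauchy conditions to the single scalar condition $l(v_m)\to 0$, and settle the hard implication by pigeonhole --- if $K=\{k\colon l(v_k)\ge\varepsilon\}$ were infinite, the Cauchy condition would force the infinitely many indices $\{\sigma(n)\colon n\ge N\}$ into a single gap between consecutive elements of $K$, and every such gap (including the initial segment before $\min K$) is finite precisely because $K$ is infinite and has no last element. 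Your gap step is sound, endpoints included: since $d_l(u_n,u_{n'})<\varepsilon$ bounds the labels on the whole closed path, no index $\sigma(n)$ with $n\ge N$ can itself lie in $K$. As for what each approach buys: the paper's proof is modular, working only with Proposition~\ref{p4.5} and the abstract Cauchy definition and reusing the previously established Proposition~\ref{prop1.9}, never the concrete form of $d_l$ on a ray; yours is self-contained and more elementary, in effect reproving Proposition~\ref{prop1.9} along the way (a Cauchy subsequence $(v_{m_k})_{k\in\mathbb{N}}$ gives $\max_{m_k\le j\le m_{k+1}}l(v_j)\to 0$, hence $l(v_m)\to 0$, since the intervals $[m_k,m_{k+1}]$ cover a tail of $\mathbb{N}$), at the cost of being tied to the ray structure --- which is exactly the setting of the proposition, so nothing is lost.
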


\begin{proof}
Let us  find subsequences $(v_{m_{k}})_{k \in \mathbb{N}}$ of $(v_m)_{m \in \mathbb{N}}$ and $(u_{n_k})_{k \in \mathbb{N}}$ of $(u_n)_{n \in \mathbb{N}}$ such that
\begin{equation}\label{pr3.3_eq1-2}
v_{m_k} = u_{n_k}
\end{equation}
for every $k \in \mathbb{N}$.

Let us denote by $U$ the range set of the sequence  $(u_{n})_{n \in \mathbb{N}}$,
$$
U: = \{u_n\colon n \in \mathbb{N}\}.
$$
Then, using \eqref{pr3.3_eq1}, we can define a bijection $f_u\colon U \to \mathbb{N}$ by the rule: If $x \in U$ and $n \in \mathbb{N}$  then
$$
(f_u(x) = n) \Leftrightarrow (u_n = x).
$$
Similarly, using the equality
$$
V(R)= \{v_m\colon m \in \mathbb{N} \},
$$
we define a bijective mapping $f_v\colon V(R) \to \mathbb{N}$ such that
$$
\left(f_v(x)=m \right) \Leftrightarrow (v_m = x)
$$
whenever  $x \in V(R)$ and $m \in \mathbb{N}$.

\cor{1}{Notice that \(V(R)\) inherits a well-order \(\precsim\) from \(\mathbb{N} = \{1, 2, \ldots, n, n+1, \ldots\}\) as follows. We have that
\[
x \precsim y \text{ if and only if } f_v(x) \leqslant f_v(y)
\]
for all \(x\), \(y \in V(R)\). Recall that every nonempty subset \(A\) of a well-ordered set has a unique minimal element, which we denote by \(\min_{\precsim} A\). Now, we set \(n_1 = 1\) and we define \(n_k\), \(m_k\) for \(k \geqslant 2\) by induction as follows:
\[
n_k = f_u\left(\min_{\precsim} \left\{U \setminus \{u_i \colon i \leqslant n_{k-1}\}\right\}\right)
\]
and
\[
m_k = f_v\left(\min_{\precsim} \left\{U \setminus \{u_i \colon i \leqslant n_{k-1}\}\right\}\right).
\]
Since \(U\) is infinite, the sequences \((n_k)_{k \in \mathbb{N}}\) and \((m_k)_{k \in \mathbb{N}}\) are well-defined and strictly increasing. Moreover, by construction,
\[
u_{n_k} = v_{m_k}
\]
for all \(k \in \mathbb{N}\), as desired.}

Let us prove now that $(u_n)_{n \in \mathbb{N}}$ is Cauchy iff $(v_m)_{m \in \mathbb{N}}$ is Cauchy.

Let $(u_n)_{n \in \mathbb{N}}$ be a Cauchy sequence in $(V(R), d_l)$. It was shown above that there are subsequences $(u_{n_{k}})_{k \in \mathbb{N}}$ and $(v_{m_{k}})_{k \in \mathbb{N}}$ of $(u_n)_{n \in \mathbb{N}}$ and, respectively, of $(v_m)_{m \in \mathbb{N}}$, such that \eqref{pr3.3_eq1-2} holds for every $k \in \mathbb{N}$. Since every subsequence of a Cauchy sequence is also a Cauchy sequence, $(u_{n_{k}})_{k \in \mathbb{N}}$ is Cauchy. Now using equality \eqref{pr3.3_eq1-2} we see that $(v_{m_{k}})_{k \in \mathbb{N}}$ is also a Cauchy sequence.  It implies that $(v_m)_{m \in \mathbb{N}}$ is Cauchy by Proposition~\ref{prop1.9}.

Suppose now that $(v_m)_{m \in \mathbb{N}}$ is a Cauchy sequence. We must show that $(u_n)_{n \in \mathbb{N}}$ is Cauchy. To do this it is enough, by Proposition~\ref{p4.5},  to prove the equality
\begin{equation}\label{pr3.3_eq14}
\lim_{n \to \infty} d_l (u_n, u_{n+1})=0.
\end{equation}

Let us define a number $m_n \in \mathbb{N}$ as
$$
m_n := f^{*}_u(u_n)
$$
for each $n \in \mathbb{N}$, \cor{1}{where \(f^{*}\) is the mapping
\[
U \xrightarrow{\mathrm{id}} V(R) \xrightarrow{f_v} \mathbb{N}
\]
such that $\mathrm{id} (x) = x$ for every $x \in U$}. Then equality \eqref{pr3.3_eq14} holds iff
\begin{equation}\label{pr3.3_eq15}
\lim_{n \to \infty} d_l (v_{m_n}, v_{m_{n+1}}) = 0.
\end{equation}
To prove the last equality we note that
\begin{equation}\label{pr3.3_eq16}
\lim_{n \to \infty} m_n = \infty
\end{equation}
because $f^{*}_u$ is injective and  \eqref{pr3.3_eq1} holds whenever $n_1 = n_2$. Limit relation \eqref{pr3.3_eq16} implies
\begin{equation}\label{pr3.3_eq17}
\lim_{n+1 \to \infty} m_{n+1}= \infty.
\end{equation}
Now using the definition of Cauchy sequences (see \eqref{s3_eq2}) we obtain
$$
\lim_{\genfrac{}{}{0pt}{2}{p \to \infty}{q \to \infty}} d_l (v_p, v_q) =0.
$$
The last equality with $p=m_n$ and $q=m_{n+1}$, \eqref{pr3.3_eq17} and \eqref{pr3.3_eq16} imply \eqref{pr3.3_eq15}. The proof is  completed.
\end{proof}

\cor{1}{
\begin{remark}
The first part of the above proof was proposed by one of the reviewers of the paper.
\end{remark}
}

We conclude this section by following

\begin{lemma}\label{lem3.5}
Let \(T = T(l)\) be a labeled tree with non-degenerate labeling. If the ultrametric space \((V(T), d_l)\) is totally bounded, then the set \(V(T)\) is at most countable.
\end{lemma}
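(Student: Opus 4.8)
The plan is to combine the partition structure of balls in an ultrametric space with the non-degeneracy of $l$. I first record the elementary fact that, for a fixed $r > 0$, any two open balls of radius $r$ in the ultrametric space $(V(T), d_l)$ are either equal or disjoint; consequently, total boundedness means precisely that $V(T)$ is the union of finitely many pairwise disjoint balls of radius $r$, for every $r > 0$.

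The first substantial step is to control the vertices carrying a large label. I claim that if $l(v) \geq t$, then $B_t(v) = \{v\}$. Indeed, for any $u \neq v$ the path $P[u,v]$ passes through $v$, so $d_l(u,v) = \max_{w \in V(P[u,v])} l(w) \geq l(v) \geq t$, and hence $u \notin B_t(v)$. Thus each such $v$ is itself a ball of radius $t$, and since there are only finitely many balls of radius $t$, the set $\{v \in V(T) : l(v) \geq t\}$ is finite for every $t > 0$. Writing $\{v : l(v) > 0\}$ as the countable union $\bigcup_{n=1}^{\infty} \{v : l(v) \geq 1/n\}$ of finite sets, I conclude that the set of positively labeled vertices is at most countable.

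The remaining task, which I expect to be the crux, is to bound the vertices labeled $0$, since the preceding singleton-ball argument breaks down at label $0$. Here non-degeneracy enters: if $l(z) = 0$ and $\{z, p\} \in E(T)$, then $\max\{l(z), l(p)\} > 0$ forces $l(p) > 0$, so every neighbour of a zero-labeled vertex is positively labeled. Fix a positively labeled vertex $p$ with $l(p) = a > 0$ and let $z_1, z_2, \ldots$ be its zero-labeled neighbours. For $i \neq j$, the path joining $z_i$ and $z_j$ is $z_i\,p\,z_j$, so $d_l(z_i, z_j) = \max\{l(z_i), l(p), l(z_j)\} = a$; by the strong triangle inequality the $z_i$ therefore lie in pairwise distinct balls of radius $a$. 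Since there are only finitely many balls of radius $a$, the vertex $p$ has only finitely many zero-labeled neighbours.

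It remains to assemble the pieces. When $\card V(T) \geq 2$ the tree $T$ is connected with no isolated vertices, so each zero-labeled vertex has at least one neighbour, which by the above is positively labeled. Choosing one such neighbour for each zero-labeled vertex yields a map from the zero-labeled vertices into the at-most-countable set of positively labeled vertices whose fibres are finite, by the previous paragraph. Hence the zero-labeled vertices form a countable union of finite sets and are at most countable, and therefore so is $V(T) = \{v : l(v) > 0\} \cup \{v : l(v) = 0\}$. The cases $\card V(T) \leq 1$ are trivial.
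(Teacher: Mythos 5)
Your argument is correct and complete. A direct comparison with the paper is only partly possible here, because the paper does not prove Lemma~\ref{lem3.5} at all: it defers entirely to Corollary~6 of \cite{DK2022AC}. What your proposal buys, therefore, is a self-contained proof, and it is a sound one. Your two-part decomposition of $V(T)$ by label value isolates exactly where each hypothesis enters. For the positively labeled vertices you use only total boundedness together with formula~\eqref{e1.1}: the observation that $l(v) \geqslant t$ forces $B_t(v) = \{v\}$ (every path ending at $v$ contains $v$) correctly shows that each level set $\{v \colon l(v) \geqslant 1/n\}$ injects into any finite cover by radius-$1/n$ balls, hence is finite, so $\{v \colon l(v) > 0\}$ is at most countable. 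For the zero-labeled vertices, non-degeneracy is genuinely needed and you deploy it correctly: every neighbour of a zero-labeled vertex is positively labeled, and two distinct zero-labeled neighbours $z_i$, $z_j$ of a fixed $p$ with $l(p) = a$ satisfy $d_l(z_i, z_j) = a$ exactly, since the unique path is $z_i\,p\,z_j$ (a triangle through $z_i$, $z_j$, $p$ is impossible in a tree), whence by the strong triangle inequality no open ball of radius $a$ contains two of them and the fibres of your ``choose a positive neighbour'' map are finite. This division is essential rather than cosmetic: total boundedness alone does not bound cardinality (compact ultrametric spaces such as $\mathbb{Z}_p$ are uncountable), and without non-degeneracy the conclusion fails outright (a star with all labels $0$ gives $d_l \equiv 0$, which covers an arbitrarily large vertex set by one ball), so your proof correctly extracts countability from the specific interplay of the tree structure, the labeling, and \eqref{e1.1}. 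The only points worth polishing are routine: the reduction to pairwise disjoint covering balls uses the standard ultrametric fact that balls of equal radius are equal or disjoint, and the final step silently uses that an at-most-countable union of finite sets is at most countable; both are harmless.
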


For the proof see Corollary~6 in \cite{DK2022AC}.

\section{Main results}

The following theorem shows that Conjecture~\ref{con1.7} is valid.

\begin{theorem}\label{th4.5}
Let $T$ be a tree and suppose that $(u_n)_{n\in \mathbb{N}}$ is a sequence of distinct vertices in $T$.
Let $H$ be the \cor{1}{convex hull} of $\{u_n \colon n \in \mathbb{N} \}$. The following conditions are equivalent:
\begin{itemize}
\item[$(i)$] $H$ is almost a ray.
\item[$(ii)$]  For every non-degenerate $l\colon V(T) \to \mathbb{R}^{+}$, the existence of a Cauchy subsequence of the sequence $(u_n)_{n \in \mathbb{N}}$ implies that $(u_n)_{n \in \mathbb{N}}$ is also Cauchy in $(V(T), d_l)$.
\end{itemize}
\end{theorem}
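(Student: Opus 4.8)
The plan is to prove the two implications separately: $(i)\Rightarrow(ii)$ by reduction to the ray case via Proposition~\ref{prop3.3}, and the contrapositive of $(ii)\Rightarrow(i)$ by an explicit construction of a non-degenerate labeling. As a preliminary I would record the structural description that an infinite tree $H$ is almost a ray if and only if it contains a ray $R$ with $V(H)\setminus V(R)$ finite: the forward half is immediate from Definition~\ref{def2.3}, and the converse follows by taking the finite tree $T_0$ to be the convex hull of $(V(H)\setminus V(R))\cup\{r_1\}$ and checking, using that $H$ is a tree, that every edge of $H$ lies in $T_0$ or in $R$. I would also use two facts about the convex hull $H$ of $S:=\{u_n:n\in\mathbb N\}$, both consequences of Lemma~\ref{lem4.1}: every leaf of $H$ belongs to $S$, and if $c\in V(H)$ and $C$ is an infinite component of $H-c$, then $S\cap C$ is infinite, since every vertex of $C$ lies on a path from $c$ to a point of $S\cap C$, so a finite $S\cap C$ would force $C$ finite.

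For $(i)\Rightarrow(ii)$, write $H=T_0\cup R$ with $T_0$ finite and $R=(w_1,w_2,\dots)$, so that all but finitely many $u_n$ lie on $R$ and $d_l$ restricted to $V(R)$ agrees with the ultrametric induced by $l|_R$ (paths between ray vertices stay in the subtree $H$). Given a Cauchy subsequence of $(u_n)$, discarding its finitely many off-ray terms leaves a Cauchy sequence of distinct vertices of $R$; Proposition~\ref{prop3.3} then forces $(w_m)_{m\in\mathbb N}$ to be Cauchy, and a second application of Proposition~\ref{prop3.3} shows that the tail of $(u_n)$ lying on $R$ is Cauchy, whence $(u_n)$ is Cauchy.

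For the contrapositive of $(ii)\Rightarrow(i)$ I would assume $H$ is not almost a ray and construct a non-degenerate $l$ (extended off $V(H)$ by arbitrary positive values) for which $(u_n)$ has a Cauchy subsequence but is not Cauchy. The combinatorial heart is the dichotomy that a tree which is not almost a ray either contains two vertex-disjoint rays or has infinitely many leaves; a vertex $v$ of infinite degree reduces to these, because among the infinitely many components of $H-v$ either two are infinite (giving two disjoint rays) or all but finitely many are finite (each contributing a leaf). In the two-ray case, a separating vertex $c$ produces infinite components $C_1,C_2$, each meeting $S$ infinitely; I put $l(c)=1$, label each vertex of $C_1$ at distance $k$ from $c$ by $2^{-k}$, and for each ray vertex $p_k$ of $R\subseteq C_1$ choose a term $t_k\in S$ whose path to $c$ passes through $p_k$. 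Then $d_l(t_k,t_{k'})\le 2^{-\min(k,k')}$, so $\{t_k:k\in\mathbb N\}$ is an infinite totally bounded subspace and the subsequence of $(u_n)$ lying in it has a Cauchy subsequence by Proposition~\ref{pr_7}; meanwhile $(u_n)$ switches between $C_1$ and its complement infinitely often, each switching pair has $c$ on its path, so $d_l(u_n,u_{n+1})\ge1$ infinitely often and $(u_n)$ is not Cauchy by Proposition~\ref{p4.5}. In the infinitely-many-leaves case I split the leaves into two infinite families: one receives the constant label $1$ and serves as local \emph{breakers}, since a leaf lies on no path between other vertices, so infinitely many such terms appearing in $(u_n)$ force $d_l(u_n,u_{n+1})\ge1$ infinitely often; from the other family I extract an infinite totally bounded set of leaves (nested along a ray in the one-ended situation, or clustered around an infinite-degree vertex given the label $0$), again yielding a Cauchy subsequence via Proposition~\ref{pr_7}.

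The step I expect to be the main obstacle is exactly this construction in the reverse direction. Two difficulties must be overcome simultaneously: first, proving the structural dichotomy and the reduction of the infinite-degree case to it cleanly enough to cover every tree that is not almost a ray; and second, producing the Cauchy subsequence in a way that is robust to the \emph{given} ordering of $(u_n)$, over which we have no control. The device that resolves the second point is to arrange for an infinite set of terms to form a totally bounded subspace and then invoke Proposition~\ref{pr_7}, which supplies an order-respecting Cauchy subsequence; verifying that the breaker labels (being placed on leaves, or on a separating vertex) do not inflate the distances inside this totally bounded set is the delicate bookkeeping that makes the argument go through.
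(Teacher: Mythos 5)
Your forward direction coincides with the paper's own argument: restrict $l$ to $H$ and then to the ray, note the restricted $d_{l_H}$, $d_{l_R}$ agree with $d_l$ on the relevant vertex sets, discard the finitely many off-ray terms, and apply Proposition~\ref{prop3.3} twice. Your contrapositive strategy is genuinely different from the paper's: where the paper splits via Lemma~\ref{lem2.4} into a vertex of infinite degree versus a ray $R$, and then subdivides according to whether $S \setminus V(R)$ and $S \cap V(R)$ are finite or infinite, you split into ``two vertex-disjoint rays'' versus ``infinitely many leaves''. Your supporting hull facts are correct (every leaf of $H$ lies in $S$; every infinite component of $H - c$ meets $S$ infinitely), and your two-ray construction checks out: with $l(c) = 1$ and $l(v) = 2^{-\dist(v,c)}$ on $C_1$ (after rerouting the ray so that $\dist(p_k, c) = k$), the median of $t_k$, $t_{k'}$, $c$ lies at distance at least $\min(k,k')$ from $c$, giving $d_l(t_k, t_{k'}) \leqslant 2^{-\min(k,k')}$, while infinitely many consecutive pairs of $(u_n)$ straddle $c$ and so have $d_l \geqslant 1$. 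The observation that breakers can be placed on leaves, which automatically belong to $S$, is a nice simplification of the paper's nonemptiness argument for $W_k$.

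However, there is a genuine gap: the dichotomy ``$H$ not almost a ray $\Rightarrow$ two disjoint rays or infinitely many leaves'', which you rightly call the combinatorial heart, is not established. Two concrete problems. First, the step ``two infinite components of $H - v$ (giving two disjoint rays)'' is false as stated: an infinite component of a tree need not contain a ray at all (it can be an infinite star, or any rayless infinite tree). Repairing it requires a lemma you neither state nor prove, namely that an infinite tree with only finitely many leaves contains a ray (equivalently, a rayless infinite tree has infinitely many leaves); only then can you argue that infinitely many components either carry leaves of $H$ or carry rays. Second, and more seriously, your reduction addresses only the infinite-degree case. When $H$ is locally finite, contains a ray, has no two vertex-disjoint rays, and has finitely many leaves, you must show $H$ is almost a ray --- and this is precisely the crux. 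It is the analogue of the paper's Case~2.2.2 argument that infinitely many vertices of the disjoint ray have degree $\geqslant 3$ in $H$, which the paper derives from the minimality of the convex hull (Definition~\ref{def2.4}) and on which it hangs the $V_{n_k}$ labeling \eqref{th4.4_eq15}; in your framework the corresponding branch analysis along the single end is simply absent. Until these two lemmas are supplied, the dichotomy, and with it your entire reverse direction, is unsupported. A further, fixable, omission in the infinitely-many-leaves case: total boundedness of your family-2 set requires small labels on entire hanging subtrees (interior path vertices enter the maximum in \eqref{e1.1}, not just the leaves), and you must ensure no family-1 breaker leaf sits inside a subtree you label small --- the ``delicate bookkeeping'' you flag but do not carry out.
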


\begin{proof}
$(i) \Longrightarrow (ii)$. Let $H$ be the union of a ray $R= (v_1, v_2, \ldots)$ with a finite tree $T^0$,
$$
H=R \cup T^0,
$$
and let $l \colon V (T) \to \mathbb{R}^+$ be a non-degenerate labeling. Suppose that a subsequence $(u_{n_k})_{k\in \mathbb{N}}$ of the sequence  $(u_{n})_{n\in \mathbb{N}}$ is a Cauchy sequence in the space $(V(T), d_l)$, where the ultrametric $d_l$ is defined by \eqref{e1.1}. We must show that $(u_n)_{n\in \mathbb{N}}$ is also Cauchy in $(V(T), d_l)$.
To do this, we will consider the restrictions $l_H$ and $l_R$ of the labeling $l$ on the sets $V(H)$ and $V(R)$ respectively.
Using Lemma~\ref{lem4.1} and Theorem~\ref{t1.4} we see that the function $d_{l_H} \colon V(H) \times V(H) \to \mathbb{R}^+ $ defined as in \eqref{e1.1} is an ultrametric on $V(H)$ and, that the equality
\begin{equation}\label{th4.5_eq1}
d_l (x,y) = d_{l_H} (x,y)
\end{equation}
holds for all $x,y \in V(H)$. Thus it suffices to show that $(u_n)_{n\in \mathbb{N}}$ is a Cauchy sequence in $(V(H), d_{l_H})$. Using \eqref{th4.5_eq1} we see that $(u_{n_k})_{k\in \mathbb{N}}$ is Cauchy in $(V(H), d_{l_H})$. Since $T^0$ is finite, we have
%the sequences $(u_n)_{n\in \mathbb{N}}$ and $(u_{n_k})$ eventually in %$V(R)$, i.e.
$$
u_n \in V(R) \quad \textrm{and}  \quad u_{n_k} \in V(R)
$$
if $n$ and $k$ are large enough. Consequently  there is $r_0 \in \mathbb{N}$ such that the range sets of the tails $(u_{n + r_0})_{n \in \mathbb{N}}$ and $(u_{n_{k + r_0}})_{k \in \mathbb{N}}$ are subsets of $V(R)$.

Using \eqref{e1.1} and Definition \ref{def2.3} we obtain the equality
\begin{equation}\label{th4.5_eq2}
d_{l_H} (x,y) = d_{l_R} (x,y)
\end{equation}
for all $x, y \in V(R)$.
Consequently  $(u_n)_{n\in \mathbb{N}}$ is Cauchy in $(V(H), d_{l_H})$ if and only if $(u_{n+r_0})_{n\in \mathbb{N}}$ is Cauchy in $(V(R), d_{l_R})$. Furthermore, using \eqref{th4.5_eq2} we see that $(u_{n_{k+r_0}})_{k\in \mathbb{N}}$ is a Cauchy sequence in $(V(R), d_{l_R})$. Hence, by Proposition~\ref{prop3.3}, the sequence $(v_n)_{n \in \mathbb{N}}$ is a Cauchy sequence in $(V(R), d_l)$. The last statement and Proposition~\ref{prop3.3} imply that $(u_{n+r_0})_{n\in \mathbb{N}}$ is Cauchy in $(V(R), d_{l_R})$.

It follows that $(u_n)_{n\in \mathbb{N}}$ is Cauchy as required.

$(ii) \Longrightarrow (i)$. Supposing that $H$ is not a union of a ray with some finite tree, we will show that $(ii)$ does not hold.
Using Lemma~\ref{lem2.4} we see that the following cases are possible.

\textbf{Case 1}. $H$ has a vertex $u$ of infinite degree.

\textbf{Case 2}. $H$ has a ray $R= (v_1, v_2, \dots)$.

\cor{1}{\textbf{Case 2.1}}. $H \supseteq R$ and the set
\begin{equation}\label{th4.4_case2.0}
S:= \{u_n \colon n \in \mathbb{N}\} \setminus V(R)
\end{equation}
is finite.

\cor{1}{\textbf{Case 2.2}}. $H \supseteq R$ and $S$ is infinite.

\cor{1}{\textbf{Case 2.2.1}. $H \supseteq R$, $S$ and and the set
\begin{equation}\label{th4.4_case2.1.1}
I:= \{u_n\colon n \in \mathbb{N}\} \cap V(R)
\end{equation}
are infinite.

\textbf{Case 2.2.2}. $H \supseteq R$, $S$ is infinite and \(I\) is finite.}

Thus, the structure of the proof of the validity of $(ii) \Longrightarrow (i)$ can be represented by Figure~1.

\begin{figure}[ht]
\begin{center}
\def\mh{0.7cm} % minimum height
\def\mw{2cm} % ??????????? ?????? ?????
\begin{tikzpicture}[x=0.65pt, y=0.65pt,yscale=-1,xscale=1,black,
everybox/.style = {
rectangle,
thick,
draw=black,
align=center,
inner sep=2pt,
anchor=north west,
minimum height = \mh},
box/.style = {everybox, minimum width =\mw},
]

\node (root) [box] at (0, 0) {\textit{H} is not \cor{1}{almost} a ray};

\node (case1)  [box, below left = .5cm and -.2 cm of root] {Case 1};
\node (case2)  [box, below right = .5cm and -.2 cm of root] {Case 2};

\node (case20)  [box, below left = .5cm and -.2 cm of case2] {Case 2.1};
\node (case21)  [box, below right = .5cm and -.2 cm of case2] {Case 2.2};

\node (case210)  [box, below left = .5cm and -.5 cm of case21] {Case 2.2.1};
\node (case211)  [box, below right = .5cm and -.5 cm of case21] {Case 2.2.2};

\draw [->] (root) -- (case1);
\draw [->] (root) -- (case2);

\draw [->] (case2) -- (case20);
\draw [->] (case2) -- (case21);

\draw [->] (case21) -- (case210);
\draw [->] (case21) -- (case211);
\end{tikzpicture}
\end{center}
\center\textrm{Figure 1. \\
The structure of the proof of the second part of Theorem~4.1.}
\label{fig1}
\end{figure}

\textbf{Case 1.} $H$ has a vertex $u$ of infinite degree.  Lemma~\ref{lem4.1} implies that the vertex set $V(H)$ is countable. Since $H$ is a tree with countable $V(H)$, the graph $H - u$ is a forest consisting of a countable set $\{T_1, T_2, \ldots \}$ of components.
Observe that each component $T_i$ has a vertex that is an element of $(u_n)_{n \in \mathbb{N}}$. Let us construct a non-degenerate labeling $l\colon V(T) \to \mathbb{R}^{+}$ such that
\begin{equation}\label{th4.4_eq2}
l(v) =\left\{
\begin{array}{ll}
0, & \quad \hbox{if}  \quad v = u,\\
\frac{1}{n}, & \quad \hbox{if}  \quad v \in V(T_{2n}) \ \textrm{for some } n \in \mathbb{N}, \\
1, & \quad \hbox{otherwise.}
\end{array}
\right.
\end{equation}
Let us define a sequence $(n_k)_{k \in \mathbb{N}}$, $n_1 < n_2 < \ldots$, recursively by setting
%\begin{equation}\label{th4.4_eq1}
$$
n_1 := 1 \quad \textrm{and} \quad n_k := \min \{i > n_{k-1} \colon l(u_i) \leqslant 1/k\} \quad \textrm{for} \quad k \geqslant 2.
$$
%\end{equation}
By Proposition~\ref{p4.5} the sequence $(u_{n_k})_{k\in \mathbb{N}}$ is a Cauchy subsequence of $(u_{n})_{n\in \mathbb{N}}$, since the inequality
$$
d_l (u_{n_k}, u_{n_{k+1}}) \leqslant \frac{1}{k}
$$
holds for every $k \in \mathbb{N}$.  It remains to note that $(u_n)_{n \in \mathbb{N}}$ cannot be a Cauchy sequence in $(V(T), d_l)$, since, by \eqref{th4.4_eq2}, there are infinitely many $n \in \mathbb{N}$ such that
$$
l(u_n) =1
$$
holds. This proves that $(ii)$ does not hold.

\cor{1}{\textbf{Case 2.1.}}  If $S$ is finite, then the \cor{1}{convex hull}  of the set $S \cup \{v_1\}$ is a finite subtree $H^1$ of the tree $H$. Since
$$
v_1 \in V(R) \cap V(H^{1}),
$$
the graph $H^1 \cup R$ is a connected subgraph of the tree $H$ and, consequently it is a subtree of $H$. The set $\{u_n \colon n \in \mathbb{N} \} $ is a subset of $V (R) \cup V(H^{1})$. Hence
\begin{equation}\label{th4.4_eq5}
H^{1} \cup R \supseteq H
\end{equation}
holds by Definition~\ref{def2.4}. Now using \eqref{th4.4_eq5} and
$$
H \supseteq H^1, \quad H \supseteq R
$$
we obtain the equality
$$
H= H^1 \cup R.
$$
Thus, $H$ is almost a ray contrary to the supposition.

\cor{1}{{\bf Case 2.2.1.}}  Let us construct a non-degenerate  labeling $l\colon V (T) \to  \mathbb{R}^+$ such that
\begin{equation}\label{th4.4_eq6}
l(v) =
\left\{
\begin{array}{ll}
\frac{1}{n}, & \quad \hbox{if} \quad v= v_n \quad \textrm{for some } \quad n \in \mathbb{N},\\
1, &  \quad \hbox{otherwise}.
\end{array}
\right.
\end{equation}
Since $S$ is infinite, there are infinitely many $k \in \mathbb{N}$ such that
$$
l(u_k) = 1,
$$
so $(u_n)_{n \in \mathbb{N}}$ is not Cauchy in the space $(V(T), d_l)$. Using \eqref{th4.4_eq6} it is easy to see that the sequence $(v_n)_{n \in \mathbb{N}}$ is a Cauchy sequence in $(V(T), d_l)$. Since $I$ is infinite, there is an infinite subsequence $(u_{n_k})_{k \in \mathbb{N}}$  of the sequence $(u_n)_{n \in \mathbb{N}}$ satisfying the inclusion
$$
\{u_{n_k} \colon k \in \mathbb{N} \} \subseteq V(R).
$$

We claim that $(u_{n_k})_{k \in \mathbb{N}}$ is Cauchy in $(V(T), d_l)$.

Let $l_R$ be a restriction of  the labeling $l\colon V(T) \to \mathbb{R}^+ $ on the set $V(R)$, where $l$ is determined by formula \eqref{th4.4_eq6}. Then \eqref{e1.1} implies the equality
$$
d_l (x,y) = d_{l_R} (x,y)
$$
for all $x,y \in R$. Hence it suffices to show that $(u_{n_k})_{k \in \mathbb{N}}$ is a Cauchy sequence in $(V(R), d_{l_R})$ which follows directly from Proposition~\ref{prop3.3}.

\cor{1}{\textbf{Case 2.2.2.}} Since $I$ is finite, there is $n_0 \in \mathbb{N}$ such that
$$
\{ u_n \colon n \in \mathbb{N}\} \cap \{ v_{n+n_0}\colon n \in \mathbb{N}\}  = \emptyset.
$$
Thus there is a ray $R^{*} = (v_1^{*}, v_2^{*}, \dots)$ such that $R^{*} \subseteq H$ and
\begin{equation}\label{th4.4_eq7}
\{ u_n \colon n \in \mathbb{N}\} \cap V(R^{*}) = \emptyset.
\end{equation}
See Figure~2 below  for an example of a sequence $(u_n)_{n \in \mathbb{N}}$ satisfying \eqref{th4.4_eq7}.

\begin{figure}[ht]
\begin{center}
\begin{tikzpicture}[
level 1/.style={level distance=\levdist,sibling distance=24mm},
level 2/.style={level distance=\levdist,sibling distance=12mm},
level 3/.style={level distance=\levdist,sibling distance=6mm},
solid node/.style={circle,draw,inner sep=1.5,fill=black},
micro node/.style={circle,draw,inner sep=0.5,fill=black}]

\def\dx{1.3cm}
\draw (\dx, 0.5) node [solid node, label= below:\(u_1\)] {} -- (2*\dx, 0) node [solid node, label= below:\(v_1^{*}\)] {};
\draw (\dx, -0.5) node [solid node, label= below:\(u_2\)] {} -- (2*\dx, 0) node [solid node, label= below:\(v_1^{*}\)] {};
\draw [solid node] (2*\dx, 0) -- (3*\dx, 0) node [solid node, label= below:\(v_2^{*}\)] {};
\draw (3*\dx, 0) -- (4*\dx, 0) node [solid node, label= below:\(v_3^{*}\)] {} -- (5*\dx, 0) node [solid node] {};

\draw (\dx+2, 1.5) node  [anchor=north west][inner sep=0.75pt]   [align=left] {$H$};

\draw (3*\dx, 0) -- (3*\dx, \dx) node [solid node, label=left:\(u_3\)] {};
\draw (4*\dx, 0) -- (4*\dx, \dx) node [solid node, label=left:\(u_4\)] {};

\draw [dashed] (5*\dx, 0) -- (6*\dx, 0) node [solid node] {};
\draw (6*\dx, 0) -- (7*\dx, 0) node [solid node, label= below:\(v_n^{*}\)] {} -- (8*\dx, 0) node [solid node, label= below:\(v_{n+1}^{*}\)] {};
\draw (7*\dx, 0) -- (7*\dx, \dx) node [solid node, label= left:\(u_{n+1}\)] {};
\draw (8*\dx, 0) -- (8*\dx, \dx) node [solid node, label= left:\(u_{n+2}\)] {};
\draw [dashed] (8*\dx, 0) -- (9*\dx, 0);
\end{tikzpicture}
\end{center}
\center\textrm{Figure 2. \\
The tree $H$ is the \cor{1}{convex hull} of the set $\{ u_n \colon n \in \mathbb{N}\}$.}
\label{fig3}
\end{figure}

Let $ ({\rm deg}_H (v_n^{*}))_{n \in \mathbb{N}}$ be the sequence of degrees of vertices of $R^{*}$. We claim that there is an infinite subsequence $ ({\rm deg}_H (v_{n_k}^{*}))_{k \in \mathbb{N}}$ of the sequence $ ({\rm deg}_H (v_n^{*}))_{n \in \mathbb{N}}$ such that $n_1 \geqslant 2$ and
\begin{equation}\label{th4.4_eq8}
{\rm deg}_H (v_{n_k}^{*}) \geqslant  3
\end{equation}
for every $k \in \mathbb{N}$.

Suppose to the contrary that
\begin{equation}\label{th4.4_eq8_2}
{\rm deg}_H (v_n^{*}) \leqslant 2
\end{equation}
holds whenever $n \geqslant n_1 >2$, where $n_1 \in \mathbb{N}$ is a given number. Since the equality
$$
{\rm deg}_{R^{*}} (v_n^{*}) = 2
$$
holds for every natural $n \geqslant 2$, inequality \eqref{th4.4_eq8_2} and $R^{*} \subseteq H $ imply that
$$
{\rm deg}_{H} (v_n^{*}) = 2
$$
holds for every $n \geqslant n_1$. Consequently the graph $H - v_{n_1}^{*}$ is a forest consisting a tree $T_1$ and the ray $R_1^{*} = (v_{n_1 +1}^{*}, v^{*}_{n_1 + 2}, \ldots),$
\begin{equation}\label{th4.4_eq9}
H - v_{n_1}^{*} = T_1 \cup R_1^{*}.
\end{equation}

Since $V(R_1^{*}) \subseteq V(R^{*})$ holds, equality \eqref{th4.4_eq7} implies
\begin{equation}\label{th4.4_eq10}
\{u_n \colon n \in \mathbb{N} \} \cap V(R^{*}) = \emptyset.
\end{equation}
Moreover, using \eqref{th4.4_eq7} we obtain
\begin{equation}\label{th4.4_eq11}
v_{n_1}^{*} \notin \{u_n \colon n \in \mathbb{N} \}.
\end{equation}
It follows from \eqref{th4.4_eq9}--\eqref{th4.4_eq11} that
$$
\{u_n \colon n \in \mathbb{N} \} \subseteq V(T_1).
$$
Hence the \cor{1}{convex hull} $H$ satisfies the inclusion
$$
H \subseteq T_1
$$
by Definition~\ref{def2.4}, which contradicts equality~\eqref{th4.4_eq9}. Thus inequality \eqref{th4.4_eq8} holds for every $k \in \mathbb{N}$.

The forest $H - v_{n_k}^{*}$ contains exactly ${\rm deg}_{H} (v^{*}_{n_k})$ components. By \eqref{th4.4_eq8}, among these components there are trees $T^{+}_{n_k}$ and $T^{-}_{n_k}$ such that
\begin{equation}\label{th4.4_eq12}
v_{n_k +1}^{*}\in T^{+}_{n_k} \quad \textrm{ and } \quad v^{*}_{n_k -1}\in T^{-}_{n_k}.
\end{equation}
Using condition \eqref{th4.4_eq8} we see that the set
\begin{equation}\label{th4.4_eq13}
V_{n_k} := V(H - v_{n_k}^{*}) \setminus \left( V(T_{n_k}^{+}) \cup V(T_{n_k}^{-})\right)
\end{equation}
is nonempty,
\begin{equation}\label{th4.4_eq14}
V_{n_k} \neq \emptyset.
\end{equation}
Let us construct a non-degenerate labeling $l\colon V(T) \to \mathbb{R}^{+}$ such that
\begin{equation}\label{th4.4_eq15}
l(v) =\left\{
\begin{array}{ll}
\frac{1}{n}, & \quad v= v^{*}_n \quad \hbox{for some}  \quad n \in \mathbb{N},\\
\ \\
\frac{1}{n_k}, & \quad \hbox{if}  \quad v \in V_{n_k} \ \textrm{and } \ k \ \textrm{is even}, \\ \ \\
1, & \quad \hbox{otherwise.}
\end{array}
\right.
\end{equation}
Since
$$
V_{n_{k_1}} \cap  V(R^{*})  = \emptyset
$$
holds for every $k \in \mathbb{N}$ and, in addition,
\begin{equation}\label{th4.4_eq16}
V_{n_{k_1}} \cap V_{n_{k_2}}  = \emptyset
\end{equation}
whenever $k_1 \neq k_2$, the labeling $l$ is correctly defined.

It follows from \eqref{th4.4_eq13}--\eqref{th4.4_eq14} that the set
$$
W_k := \{u_n \colon n \in \mathbb{N}\} \cap V_{n_k}
$$
is nonempty for every $k \in \mathbb{N}$. In particular if $k$ is an odd number, then \eqref{th4.4_eq15} implies
\begin{equation}\label{th4.4_eq17}
l(v)=1
\end{equation}
for every $v \in W_k$. Now using \eqref{th4.4_eq16} and \eqref{th4.4_eq17} it is easy to prove that $(u_n)_{n \in \mathbb{N}}$ is not Cauchy in the ultrametric space $(V(T), d_l)$. Let us consider now some distinct even numbers $k_1$ and $k_2$. Then using \eqref{th4.4_eq15} and \eqref{th4.4_eq16} we obtain
$$
d_l (w_1, w_2) = d_l (v^{*}_{n_{k_1}}, v^{*}_{n_{k_2}}) = \max \left \{ \frac{1}{n_{k_1}}, \frac{1}{n_{k_2}}\right \}.
$$
Consequently the sequence $(u_n)_{n\in \mathbb{N}}$ contains an infinite Cauchy subsequence. Thus $(ii)$ does not hold.

The proof is completed.
\end{proof}

Analyzing the second part of the proof of Theorem~\ref{th4.5} we obtain the following.

\begin{corollary}\label{cor4.2}
Let $T$ be a tree, $(u_n)_{n \in \mathbb{N}}$ be a sequence of distinct vertices of $T$, and let $H$ be the \cor{1}{convex hull} of $\{ u_n\colon n \in \mathbb{N} \}$. If $H$ is not almost a ray, then there is a non-degenerate labeling $l\colon V(T) \to \mathbb{R}^+$ such that the sequence $(u_n)_{n \in \mathbb{N}}$ has a Cauchy subsequence but the set
$$
\{ v \in V(E)\colon l(v) =1 \}
$$
is infinite.
\end{corollary}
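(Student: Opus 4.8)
The plan is to read the required labeling straight off the proof of the implication $(ii) \Longrightarrow (i)$ in Theorem~\ref{th4.5}, since that argument already manufactures, in every situation where $H$ fails to be almost a ray, a non-degenerate labeling for which $(u_n)_{n\in\mathbb{N}}$ carries a Cauchy subsequence. Concretely, I would first apply Lemma~\ref{lem2.4} to split into the alternatives ``$H$ has a vertex of infinite degree'' and ``$H$ contains a ray'', and then recall that among the resulting subcases only Case~2.1 (the set $S$ of \eqref{th4.4_case2.0} being finite) forces $H$ to be almost a ray. Hence the standing hypothesis that $H$ is \emph{not} almost a ray pushes us into exactly one of Case~1, Case~2.2.1, or Case~2.2.2.

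In each of these three cases the proof of Theorem~\ref{th4.5} displays an explicit non-degenerate labeling --- namely \eqref{th4.4_eq2}, \eqref{th4.4_eq6}, and \eqref{th4.4_eq15} respectively --- together with a verified Cauchy subsequence of $(u_n)_{n\in\mathbb{N}}$. For the first half of the corollary I simply reuse these constructions verbatim, so the only genuinely new point is that each such labeling takes the value $1$ on infinitely many vertices. Here the observation is that the very same arguments already exhibit infinitely many indices $n$ with $l(u_n)=1$: in Case~1 from the infinitely many odd-indexed components, each of which contributes a label-$1$ term of the sequence; in Case~2.2.1 from the infinitude of $S$; and in Case~2.2.2 from the pairwise disjoint nonempty sets $W_k=\{u_n:n\in\mathbb{N}\}\cap V_{n_k}$ with $k$ odd, on which $l\equiv 1$. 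Since the $u_n$ are pairwise distinct, these indices yield infinitely many distinct elements of $\{v\in V(T):l(v)=1\}$ (reading the $V(E)$ of the statement as the full vertex set $V(T)$), which is precisely the claim.

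Because all the substantive work --- constructing the labelings, checking their non-degeneracy, and extracting the Cauchy subsequences --- is already carried out inside Theorem~\ref{th4.5}, I do not expect any real analytic obstacle. The step demanding the most care is purely organizational: one must confirm that the negation of ``$H$ is almost a ray'' excludes Case~2.1 and \emph{only} Case~2.1, and then check that in each of the three surviving cases the label-$1$ vertices already counted in the non-Cauchy argument are genuinely infinite in number, so that the conclusion about $\{v\in V(T):l(v)=1\}$ holds uniformly across the cases.
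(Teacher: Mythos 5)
Your proposal is correct and takes essentially the same route as the paper: the paper offers no separate argument for Corollary~\ref{cor4.2}, stating only that it follows by analyzing the second part of the proof of Theorem~\ref{th4.5}, and your reconstruction — excluding Case~2.1 under the hypothesis that $H$ is not almost a ray, reusing the labelings \eqref{th4.4_eq2}, \eqref{th4.4_eq6}, \eqref{th4.4_eq15} with their already-verified Cauchy subsequences, and observing that each labeling takes the value $1$ on infinitely many distinct vertices (odd-indexed components in Case~1, the infinite set $S$ in Case~2.2.1, the pairwise disjoint nonempty sets $W_k$ for odd $k$ in Case~2.2.2) — is precisely that analysis. Your reading of the typo $V(E)$ as $V(T)$ is also the intended one.
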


The next theorem is the second main result of the paper.

\begin{theorem}\label{th4.2}
Let $T$ be an infinite tree. Then the following conditions are equivalent:
\begin{itemize}
\item[$(i)$] $T$ is almost a ray.
\item[$(ii)$]  For every non-degenerate labeling $l\colon V(T) \to \mathbb{R}^+$ the space $(V(T), d_l)$ is totally bounded iff there is an infinite totally bounded  $A \subseteq V(T)$.
\end{itemize}
\end{theorem}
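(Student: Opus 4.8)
The plan is to establish the two implications separately: $(i) \Rightarrow (ii)$ is the analytic direction and rests on Propositions~\ref{prop3.3} and~\ref{pr_7}, while $(ii) \Rightarrow (i)$ is proved by contraposition, with Corollary~\ref{cor4.2} carrying the combinatorial weight.

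For $(i) \Rightarrow (ii)$, I first note that one half of the internal biconditional is free: if $(V(T), d_l)$ is totally bounded, then $A := V(T)$ is itself an infinite totally bounded subset, since $T$ is infinite. So only the converse needs work. Write $T = T_0 \cup R$ with $T_0$ finite and $R = (v_1, v_2, \ldots)$ a ray, and suppose some infinite totally bounded $A \subseteq V(T)$ exists. Because $V(T) \setminus V(R) \subseteq V(T_0)$ is finite, the intersection $A \cap V(R)$ remains infinite and totally bounded, and, since paths between vertices of $R$ stay inside $R$ (cf.\ \eqref{th4.5_eq2}), the ultrametric $d_l$ restricted to $V(R)$ coincides with $d_{l_R}$, where $l_R$ is the non-degenerate restriction of $l$ to $V(R)$. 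Picking a sequence of distinct points of $A \cap V(R)$ and applying Proposition~\ref{pr_7}, I extract a Cauchy subsequence of distinct vertices of $V(R)$; Proposition~\ref{prop3.3} then forces the ray sequence $(v_m)_{m \in \mathbb{N}}$ to be Cauchy. Once $(v_m)$ is Cauchy, Proposition~\ref{prop3.3} makes every distinct-point sequence in $V(R)$ Cauchy, so every sequence in $V(R)$ has a Cauchy subsequence (a constant one when a value repeats infinitely often, a distinct-point one otherwise). By Proposition~\ref{pr_7}, $(V(R), d_{l_R})$ is totally bounded, and adjoining the finitely many points of $V(T) \setminus V(R)$ preserves total boundedness; hence $(V(T), d_l)$ is totally bounded.

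For $(ii) \Rightarrow (i)$, I argue contrapositively: assuming $T$ is not almost a ray, I construct a non-degenerate $l$ violating the biconditional in $(ii)$. The key is to find a sequence $(u_n)_{n \in \mathbb{N}}$ of distinct vertices whose convex hull $H$ is not almost a ray; Corollary~\ref{cor4.2} then yields a non-degenerate $l \colon V(T) \to \mathbb{R}^{+}$ for which $(u_n)$ has a Cauchy subsequence while the set $\{v \in V(T) : l(v) = 1\}$ is infinite. The range of that Cauchy subsequence is infinite (distinct vertices) and totally bounded (the range of any Cauchy sequence is totally bounded), whereas by \eqref{e1.1} any two distinct vertices of label $1$ lie at distance $\geq 1$, so the infinitely many such vertices form a $1$-separated set and $(V(T), d_l)$ fails to be totally bounded. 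Thus the biconditional in $(ii)$ is false, as required.

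The principal obstacle is exhibiting the sequence with non-almost-ray convex hull, and I expect to handle it through the dichotomy of Lemma~\ref{lem2.4}. If $T$ has a vertex $u$ of infinite degree, I take $(u_n)$ to list $u$ together with infinitely many of its neighbours; the convex hull is an infinite star, which is not locally finite and therefore not almost a ray, because every almost ray $T_0 \cup R$ is locally finite (each vertex has degree at most $|E(T_0)| + 2$). If $T$ has no vertex of infinite degree, then $T$ is locally finite and hence countable, so I may enumerate $V(T)$ as $(u_n)$, and its convex hull is all of $T$, which is not almost a ray by hypothesis. Verifying that these two cases are exhaustive and that, in the locally finite case, countability of $V(T)$ permits a single sequence to realize $T$ itself as a convex hull is the delicate point; granting it, the conclusion follows from Corollary~\ref{cor4.2} together with the total-boundedness observations above.
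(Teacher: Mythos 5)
Your proof is correct, but it takes a genuinely different route from the paper's in both directions. For $(i)\Rightarrow(ii)$ the paper does not work on the ray directly: it enumerates the countable set $V(T)$ as a sequence of distinct vertices whose convex hull is $T$, extracts a Cauchy subsequence from $A$ via Proposition~\ref{pr_7}, and then invokes Theorem~\ref{th4.5} to conclude that the whole enumeration is Cauchy, after which every sequence with infinite range shares a common (hence Cauchy) subsequence with it; your restriction-to-the-ray argument instead inlines the relevant special case of Proposition~\ref{prop3.3} (discarding the finite part $V(T)\setminus V(R)$, using that $d_l$ and $d_{l_R}$ agree on $V(R)$), which is more self-contained but essentially equivalent. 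The more substantial divergence is in $(ii)\Rightarrow(i)$: the paper does \emph{not} argue by pure contraposition, because it must first establish that $V(T)$ is countable before any enumeration is possible; it secures this by using $(ii)$ affirmatively --- via Lemma~\ref{lem2.4} it builds an infinite subtree $T_0$ (a star or a ray) carrying a non-degenerate labeling $l_0$ with $(V(T_0),d_{l_0})$ totally bounded, extends $l_0$ by the constant $1$ to $l^*$, applies $(ii)$ to conclude $(V(T),d_{l^*})$ is totally bounded, and then obtains countability from Lemma~\ref{lem3.5} --- and only afterwards supposes $T$ is not almost a ray and derives a contradiction through Corollary~\ref{cor4.2}. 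Your contrapositive bypasses this machinery: when $T$ has a vertex of infinite degree you produce a purely local witness sequence (the infinite star hull, not almost a ray since every almost ray is locally finite), needing no enumeration of $V(T)$ at all; otherwise $T$ is locally finite and hence countable by the standard fact that a connected locally finite graph is countable (a fact the paper never has to invoke), so an enumeration realizes $T$ itself as the convex hull. This buys you independence from Lemma~\ref{lem3.5} and from the $l^*$-extension step, at the cost of importing that countability fact, which you should state and justify explicitly (finiteness of the balls around a fixed root). One cosmetic point: your two cases are exhaustive by excluded middle alone, so Lemma~\ref{lem2.4}, whose dichotomy is ``infinite-degree vertex or ray,'' is not actually what you use. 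Your concluding ingredients --- the range of a Cauchy subsequence of distinct vertices is an infinite totally bounded set, while by \eqref{e1.1} the infinitely many vertices with $l(v)=1$ are pairwise at distance at least $1$, so $(V(T),d_l)$ is not totally bounded --- coincide with the paper's.
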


\begin{proof}
$(i) \Longrightarrow (ii)$. Let $T$ be almost a ray, $l\colon V(T) \to \mathbb{R}^{+}$ be non-degenerate, and let $A$ be an infinite totally bounded subset of the space $(V(T), d_l)$. We must show that $(V(T), d_l)$ is totally bounded.

Since $T$ is almost a ray, the set $V(T)$ is countable and, consequently, the vertices of $T$ can be numbered in a sequence $(u_n)_{n \in \mathbb{N}}$ such that
$$
u_{n_1} \neq u_{n_2}
$$
whenever $n_1 \neq n_2$. The equality
\begin{equation}\label{th4.2_eq1}
V(T) = \{ u_n \colon n \in \mathbb{N}\}
\end{equation}
and Definition~\ref{def2.4} imply that the tree $T$ is the \cor{1}{convex hull} of the set $\{u_n \colon n \in \mathbb{N}\}$. Let us prove now that $(u_n)_{n \in \mathbb{N}}$ is a Cauchy sequence in $(V(T), d_l)$.
According to Theorem~\ref{th4.5}, for this it is sufficient to find a Cauchy subsequence of the sequence $(u_n)_{n \in \mathbb{N}}$.

Since $A$ is an infinite subset of $V(T)$, equality \eqref{th4.2_eq1} implies that $(u_n)_{n \in \mathbb{N}}$ contains a subsequence $(u_{n_k})_{k \in \mathbb{N}}$ such that $u_{n_k} \in A$ for every $k \in \mathbb{N}$. Let us write for simplicity
$$
u_k^{*} = u_{n_k}, \quad k \in \mathbb{N}.
$$
Since $A$ is a totally bounded subset of $(V(T), d_l)$, the sequence $(u_k^{*})_{k \in \mathbb{N}}$ contains a Cauchy subsequence $(u_{k_l}^{*})_{l \in \mathbb{N}}$ by Proposition~\ref{pr_7}. The sequence $(u_{k_l}^{*})_{l \in \mathbb{N}}$ also is a subsequence of $(u_n)_{n \in \mathbb{N}}$. Thus $(u_{n})_{n \in \mathbb{N}}$ is a Cauchy sequence in $(V(T), d_l)$ by Theorem~\ref{th4.5}.

Let us consider now an arbitrary infinite sequence $(v_n)_{n \in \mathbb{N}}$ of points of $(V(T), d_l)$. To prove that $(V(T), d_l)$ is totally bounded it is sufficient, by Proposition~\ref{pr_7}, to find a Cauchy subsequence of $(v_n)_{n \in \mathbb{N}}$. If the range set of $(v_n)_{n \in \mathbb{N}}$ is finite, then there is $(v_{n_k})_{k \in \mathbb{N}}$ such that $v_{n_{k_1}} = v_{n_{k_2}}$ for all $k \in
\mathbb{N}$. It is clear that  $(v_{n_k})_{k \in \mathbb{N}}$ is Cauchy. For the case of an infinite range set of $(v_n)_{n \in \mathbb{N}}$, the sequences $(u_n)_{n \in \mathbb{N}}$ and $(v_n)_{n \in \mathbb{N}}$ have a common subsequence. It was proved above that $(u_n)_{n \in \mathbb{N}}$  is a Cauchy sequence. Consequently that  common subsequence also is  Cauchy as required.

$(ii) \Longrightarrow (i)$.  Let condition $(ii)$ hold. We must show that $T$ is almost a ray. \cor{1}{Using Lemma~\ref{lem2.4}, we can find an infinite subtree \(T_0\) of the tree \(T\) and a non-degenerate labeling \(l_0 \colon V(T_0) \to \mathbb{R}^{+}\) such that \((V(T_0), d_{l_0})\) is a totally bounded ultrametric space. Let us define \(l^* \colon V(T) \to \mathbb{R}^{+}\) as
\[
l^*(v) = \begin{cases}
l_0(v), & \text{if } v \in V(T_0),\\
1, & \text{otherwise}.
\end{cases}
\]
Then the labeling \(l^*\) is also non-degenerate. Moreover, since \(T_0\) is a subtree of \(T\), equality~\eqref{e1.1} implies the equality
\[
d_{l_0}(u, v) = d_{l^*}(u, v)
\]
for all \(u\), \(v \in V(T_0)\). Consequently, \(V(T_0)\) is an infinite totally bounded subset of the ultrametric space \((V(T), d_{l^*})\). Now, using~\((ii)\) with \(l = l^*\) and \(A = V(T_0)\), we see that \((V(T), d_{l^*})\) is totally bounded.}

Since $T$ is an infinite tree, the set $V(T)$ is countable by Lemma~\ref{lem3.5}. Consequently, there is a sequence $(u_n)_{n \in \mathbb{N}}$ of distinct points of $V(T)$ such that \eqref{th4.2_eq1} holds. It is clear that $T$ is the \cor{1}{convex hull} of $\{u_n \colon n \in \mathbb{N} \}$.

Suppose that $T$ is not almost a ray. Then, by Corollary~\ref{cor4.2}, there is a non-degenerate labeling $l\colon V(T) \to \mathbb{R}^{+}$ such that  the  sequence $(u_n)_{n \in \mathbb{N}}$ has a Cauchy subsequence  $(u_{n_k})_{k \in \mathbb{N}}$ but the set
\begin{equation}\label{th4.2_eq2}
D=\{ v\in V(T) \colon l(v)=1\}
\end{equation}
is infinite. Let us denote by $A$ the range set of $(u_{n_k})_{k \in \mathbb{N}}$ and consider an arbitrary infinite sequence $(v_{n})_{n \in \mathbb{N}}$ of distinct points of $A$. Arguing in the same way as in the first part of the proof, we can find a Cauchy  subsequence of the sequence $(v_{n})_{n \in \mathbb{N}}$. Hence $A$ is a totally bounded subset of $(V(T), d_l)$ by Proposition~\ref{pr_7}. Since $A$ is infinite, condition $(ii)$ implies that $(V(T), d_l)$ also is totally bounded. Consequently the set $D$, defined by \eqref{th4.2_eq2}, must be totally bounded as a subset of a totally bounded metric space. To complete the proof we note that $D$ cannot be totally bounded, because $D$ is infinite and, moreover, \eqref{e1.1} and \eqref{th4.2_eq2} imply the inequality
$$
d_l (x,y) \geqslant 1
$$
for all distinct $x,y \in D$. Thus $T$ is almost a ray, as required.
\end{proof}

Recall that a point $p$ of a metric space $(X,d)$ is \emph{isolated} if there is $\varepsilon \in (0, \infty)$ such that
$$
d(x, p) > \varepsilon
$$
for every $x \in X \setminus \{p\}$.

\begin{corollary}\label{cor4.4}
Let $T$ be an infinite tree. Suppose that for every non-degenerate labeling $l\colon V(T) \to \mathbb{R}^+$ the space $(V(T), d_l)$ is totally bounded iff there is an infinite totally bounded  $A \subseteq V(T)$. Then the ultrametric space $(V(T), d_l)$ is not compact for every non-degenerate labeling $l\colon V(T) \to \mathbb{R}^{+}$.
\end{corollary}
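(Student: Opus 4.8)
The plan is to reduce the statement to the combinatorial fact that $T$ is almost a ray, and then, for an arbitrary non-degenerate labeling, to exhibit an open ball cover of $(V(T), d_l)$ admitting no finite subcover, contradicting Definition~\ref{d3.6}. First I would note that the hypothesis imposed on $T$ is verbatim condition $(ii)$ of Theorem~\ref{th4.2}; hence by that theorem condition $(i)$ holds and $T$ is almost a ray. Using Definition~\ref{def2.3}, I fix a decomposition $T = T_0 \cup R$ with $T_0$ a finite tree and $R = (v_1, v_2, \ldots)$ a ray, and I fix an arbitrary non-degenerate $l \colon V(T) \to \mathbb{R}^+$. The infinite set $V(R) = \{v_n : n \in \mathbb{N}\}$ of pairwise distinct ray vertices is the set whose unavoidable ``spreading out'' will obstruct compactness.

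The key step, which I expect to be the main obstacle, is a uniform lower bound for the distances from a fixed vertex to the tail of the ray: for every $p \in V(T)$ there exist $c_p > 0$ and $N_p \in \mathbb{N}$ such that $d_l(p, v_n) \geq c_p$ for all $n \geq N_p$. To prove this I would argue purely from the tree structure. Since the path $P[p, v_1]$ is finite, let $v_K$ be the ray-vertex of largest index lying on it. Removing the edge $\{v_K, v_{K+1}\}$ splits the tree $T$ into two subtrees; one of them contains $p$ together with $v_1, \dots, v_K$ (because $P[p, v_1]$ uses no $v_j$ with $j > K$, hence cannot traverse $\{v_K, v_{K+1}\}$), while the other contains $v_{K+1}, v_{K+2}, \dots$ (connected among themselves by ray edges distinct from the removed one). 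Consequently, for every $n \geq K+1$ the path $P[p, v_n]$ must traverse $\{v_K, v_{K+1}\}$, so by \eqref{e1.1} we get $d_l(p, v_n) \geq \max\{l(v_K), l(v_{K+1})\}$, which is positive by non-degeneracy of $l$ on that edge. This yields the claim with $c_p = \max\{l(v_K), l(v_{K+1})\}$ and $N_p = K+1$. The delicate point is precisely verifying that $p$ lands in the first subtree, i.e.\ that $P[p, v_1]$ cannot cross the separating edge.

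With this bound in hand the conclusion is routine. For each $p \in V(T)$ the ball $B_{c_p}(p)$ meets $\{v_n : n \in \mathbb{N}\}$ only in the finite set $\{v_n : n < N_p\}$, while the family $\{B_{c_p}(p) : p \in V(T)\}$ covers $V(T)$ since $p \in B_{c_p}(p)$ for every $p$. Any finite subfamily $\{B_{c_{p_1}}(p_1), \ldots, B_{c_{p_k}}(p_k)\}$ therefore covers only finitely many of the $v_n$, and so cannot cover the infinite set $V(R) \subseteq V(T)$. Hence no finite subfamily covers $V(T)$, and by Definition~\ref{d3.6} the space $(V(T), d_l)$ is not compact. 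As $l$ was an arbitrary non-degenerate labeling, the corollary follows. (Alternatively, the same lower bound shows that the sequence $(v_n)_{n \in \mathbb{N}}$ has no convergent subsequence, which rules out compactness as well; I prefer the explicit cover since it relies only on the open-cover definition already fixed in the paper.)
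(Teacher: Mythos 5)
Your proof is correct, and after the common opening move (the hypothesis of the corollary is precisely condition $(ii)$ of Theorem~\ref{th4.2}, so $T$ is almost a ray) it takes a genuinely different route from the paper. The paper argues structurally: an almost ray is locally finite, and for a locally finite tree with non-degenerate labeling every point of $(V(T), d_l)$ is isolated --- for each $p$ one has $d_l(p,q) \geqslant \min_u \max\{l(p), l(u)\} > 0$ for all $q \neq p$, the minimum running over the finitely many neighbours $u$ of $p$ --- so $(V(T), d_l)$ is an infinite space consisting entirely of isolated points, and the cover by singleton balls has no finite subcover. You instead exploit only the presence of the ray: your separating-edge lemma is sound, and the delicate point you flag is handled exactly right, since $P[p, v_1]$ omits $v_{K+1}$ and therefore cannot traverse the edge $\{v_K, v_{K+1}\}$, which places $p$ in the component of $v_K$ after that edge is deleted and forces every $P[p, v_n]$ with $n \geqslant K+1$ through both $v_K$ and $v_{K+1}$, giving $d_l(p, v_n) \geqslant \max\{l(v_K), l(v_{K+1})\} > 0$ by non-degeneracy; the resulting ball cover, in which each ball catches only finitely many ray vertices, then refutes compactness via Definition~\ref{d3.6}. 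What each approach buys: the paper's argument is shorter and yields the stronger topological conclusion that the whole space is discrete, whereas yours never uses local finiteness and thus actually proves a more general fact --- $(V(T), d_l)$ is non-compact for every non-degenerate labeling of \emph{any} tree containing a ray --- which is essentially the raylessness criterion from Theorem~7 of \cite{DK2022AC} that the paper itself quotes in Section~\ref{sec5}. Your closing remark is also well judged: since Definition~\ref{d3.6} defines compactness through covers by open balls, the explicit cover is preferable to the sequential alternative, which would require importing the equivalence of compactness and sequential compactness not stated in the paper.
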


\begin{proof}
Theorem~\ref{th4.2} implies that $T$ is almost a ray. Hence $T$ is locally finite by Definition~\ref{def2.2}. Let $l\colon V(T) \to \mathbb{R}^{+}$ be non-degenerate. Since $T$ is locally finite, Theorem~\ref{t1.4} implies that every point of $(V(T), d_l)$ is isolated. Thus $(V(T), d_l)$ is not compact by Definition~\ref{d3.6}.
\end{proof}

\cor{1}{
\section{Two conjectures}\label{sec5}

Theorems~\ref{th4.5} and \ref{th4.2} imply that, for every almost ray \(T\) and each non-degenerate labeling \(l \colon V(T) \to \mathbb{R}^{+}\), the completion of the ultrametric space \((V(T), d_l)\) has at most one accumulation point.

\begin{conjecture}\label{con5.1}
Let \(T\) be an infinite, locally finite tree. Then the following statements are equivalent:
\begin{enumerate}
\item[\((i)\)] The completion of ultrametric space \((V(T), d_l)\) has exactly one accumulation point for every vertex labeling \(l \colon V(T) \to \mathbb{R}^{+}\) generating a totally bounded ultrametric \(d_l\).
\item[\((ii)\)] The tree \(T\) is a union of a ray \(R\) and forest \(F\), i.e., \(T = R \cup F\), such that all components of \(F\) are finite trees.
\end{enumerate}
\end{conjecture}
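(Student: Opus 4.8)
The plan is to recast the combinatorial condition $(ii)$ in a form that is easier to use metrically, and then prove the two implications separately. First I would establish the auxiliary claim that, \emph{for an infinite, locally finite tree $T$, condition $(ii)$ holds if and only if $T$ contains no two vertex-disjoint rays.} If $T=R\cup F$ with all components of $F$ finite, then any ray $R'\subseteq T$ can spend only finitely long in each finite component of $F$, and a tree-path leaving one component of $F$ and entering another must cross $V(R)$; hence $R'$ meets $V(R)$ in infinitely many vertices, so no ray is disjoint from $R$ and no two rays are disjoint. Conversely, if $T$ has no two disjoint rays, then by Lemma~\ref{lem2.4} (the infinite-degree alternative being excluded by local finiteness) $T$ contains a ray $R$, and every component of the forest $T-V(R)$ is finite, since an infinite component would again contain, by Lemma~\ref{lem2.4}, a ray disjoint from $R$. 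Grouping for each $v\in V(R)$ the vertex $v$ together with the finitely many finite components of $T-V(R)$ attached at $v$ (and the connecting edges) into a finite tree $T_v$, and setting $F=\bigcup_{v}T_v$, yields a forest with finite components satisfying $T=R\cup F$; the edge-bookkeeping here is routine.

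For $(ii)\Rightarrow(i)$ I would fix a non-degenerate $l$ for which $d_l$ is totally bounded and write $T=R\cup F$ as above with $R=(v_1,v_2,\ldots)$. Since the path in $T$ joining two vertices of $R$ lies in $R$, the restriction of $d_l$ to $V(R)$ is the ray ultrametric $d_{l_R}$; by Proposition~\ref{pr_7} the sequence $(v_m)_{m\in\NN}$ has a Cauchy subsequence, so Proposition~\ref{prop1.9} makes $(v_m)_{m\in\NN}$ itself Cauchy, with limit $\omega$ in the completion. The key step is to show that \emph{every} Cauchy sequence $(x_n)$ of distinct vertices converges to $\omega$; as $V(T)$ is infinite and all its points are isolated (Corollary~\ref{cor4.4}), $\omega$ is an accumulation point, so this gives exactly one. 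For each $n$ let $v_{p(n)}$ be the unique vertex of $R$ nearest $x_n$. Because the components of $F$ are finite and $T$ is locally finite, any bounded stretch of $R$ together with the components attached to it is a finite vertex set, so distinctness of the $x_n$ forces $p(n)\to\infty$, whence $v_{p(n)}\to\omega$. Finally, whenever $x_n,x_m$ lie in distinct components so that $v_{p(n)}\neq v_{p(m)}$, their connecting path runs through $R$ and
$$
d_l(x_n,x_m)\;\geq\; d_l\bigl(x_n,v_{p(n)}\bigr).
$$
Dominating $d_l(x_n,v_{p(n)})$ by such distances $d_l(x_n,x_m)$ (with $m$ large and $p(m)\neq p(n)$), which tend to $0$ by the Cauchy property, gives $d_l(x_n,v_{p(n)})\to0$, and the strong triangle inequality yields $x_n\to\omega$.

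For $(i)\Rightarrow(ii)$ I would argue by contraposition through the auxiliary claim: if $(ii)$ fails, then $T$ contains two vertex-disjoint rays $R_1,R_2$, and it suffices to exhibit a single totally bounded labeling whose completion has at least two accumulation points. Fixing a root $r\in V(T)$, I set $l(v)=1/\bigl(1+\dist_T(r,v)\bigr)$, which is non-degenerate. Local finiteness makes each level $\{v:\dist_T(r,v)=n\}$ finite, and in this ultrametric the ball of radius $1/n$ about a vertex of depth $n$ is exactly the subtree rooted there; hence for every $\rho>0$, choosing $n$ with $1/n<\rho$, the finitely many depth-$n$ subtrees (each of diameter $<\rho$) together with the finitely many vertices of depth less than $n$ cover $V(T)$, so $(V(T),d_l)$ is totally bounded. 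Along $R_1$ and $R_2$ the labels tend to $0$, so both rays are Cauchy and converge to points $p_1,p_2$; since the unique path joining a vertex of $R_1$ to a vertex of $R_2$ always contains the finite connecting path between the two rays, its minimal depth stays bounded and $d_l(v,w)$ is bounded away from $0$ for $v\in V(R_1)$, $w\in V(R_2)$, forcing $p_1\neq p_2$. Thus the completion has at least two accumulation points and $(i)$ fails.

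The step I expect to be the main obstacle is the metric heart of $(ii)\Rightarrow(i)$: showing that total boundedness alone prevents the finite components of $F$ from carrying large labels infinitely often, i.e.\ that $d_l(x_n,v_{p(n)})\to0$. The displayed inequality combined with the Cauchy property settles this, but it is delicate to guarantee that the relevant $x_n$ genuinely lie in distinct components, so that their connecting paths pass through the spine $R$; this is exactly where $p(n)\to\infty$ and the finiteness of the components of $F$ are used. By comparison, the reconstruction of the edges between $R$ and $F$ in the decomposition, the passage from subsequential to full convergence, and the verification of total boundedness for the depth labeling are routine.
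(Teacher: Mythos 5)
You should first be aware that the paper contains no proof of this statement to compare against: it is Conjecture~\ref{con5.1}, which the authors explicitly leave open, motivating it only by the remark that Theorems~\ref{th4.5} and~\ref{th4.2} give \emph{at most} one accumulation point when \(T\) is almost a ray. Note also that condition \((ii)\) is genuinely wider than ``almost a ray'' (the spine may carry infinitely many finite legs), so Theorem~\ref{th4.5} cannot simply be invoked; your proposal rightly argues directly. Judged on its own, your route appears sound and, written out in full, would settle the conjecture affirmatively. The auxiliary equivalence of \((ii)\) with ``\(T\) contains no two vertex-disjoint rays'' is correct: each component of \(T - V(R)\) attaches to a unique spine vertex (two attachments would create a cycle), local finiteness bounds the number of components per spine vertex, and Lemma~\ref{lem2.4} supplies the ray inside any infinite component. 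In \((ii)\Rightarrow(i)\), the projection \(p(n)\), the finiteness of \(\{x \colon p(x) \leqslant N\}\), the inequality \(d_l(x_n,x_m) \geqslant d_l(x_n, v_{p(n)})\) when \(p(m) \neq p(n)\), and the upgrade of \((v_m)_{m\in\mathbb{N}}\) from subsequentially Cauchy (Proposition~\ref{pr_7}) to Cauchy (Proposition~\ref{prop1.9}, using that paths between spine vertices stay on the spine) all check out. In \((i)\Rightarrow(ii)\), the depth labeling \(l(v) = 1/\bigl(1+\dist_T(r,v)\bigr)\) is totally bounded exactly as you say, both disjoint rays are Cauchy by Proposition~\ref{p4.5} since depths tend to infinity through finite levels, and the limits are distinct because every path joining the two rays contains the fixed bridge path, bounding the distance below by a positive constant.

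Two places need tightening, though both are fillable. First, in the auxiliary claim you pass from ``every ray meets \(V(R)\) infinitely often'' directly to ``no two rays are disjoint''; two infinite subsets of \(V(R)\) can be disjoint, so you must add that a ray meeting \(V(R)\) in infinitely many vertices contains, between any two such vertices, the corresponding subpath of \(R\) (uniqueness of tree paths), hence contains an entire tail of \(R\); two tails of \(R\) intersect. Second, your citation of Corollary~\ref{cor4.4} for ``all points of \(V(T)\) are isolated'' is inaccurate: that corollary asserts non-compactness, and the isolation fact (local finiteness plus non-degeneracy of \(l\), via Theorem~\ref{t1.4}) is only established inside its proof, so state it separately; you should also record the routine but necessary lemma that the accumulation points of the completion are precisely the limits of Cauchy sequences of \emph{distinct} vertices, which is exactly where isolation is used to rule out accumulation at points of \(V(T)\) itself.
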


It should be noted here that a tree \(T\) is locally finite if and only if there is a vertex labeling \(l \colon V(T) \to \mathbb{R}^{+}\) generating a discrete totally bounded ultrametric \(d_l\) on \(V(T)\), see Corollary~8 in \cite{DK2022AC}.

Our next goal is to give a modification of Conjecture~\ref{con5.1} for the case of compact \((V(T), d_l)\). The completion of every totally bounded metric space is compact but it was shown in Theorem~7 of \cite{DK2022AC} that the existence of a labeling \(l \colon V(T) \to \mathbb{R}^{+}\) generating a compact ultrametric \(d_l\) implies that the tree \(T\) is rayless.

\begin{conjecture}\label{con5.2}
Let \(T\) be a rayless countable tree. Then the following statements are equivalent:
\begin{enumerate}
\item[\((i)\)] An ultrametric space \((V(T), d_l)\) has exactly one accumulation point for every vertex labeling \(l \colon V(T) \to \mathbb{R}^{+}\) generating a compact \(d_l\).
\item[\((ii)\)] The tree \(T\) has exactly one vertex of infinite degree.
\end{enumerate}
\end{conjecture}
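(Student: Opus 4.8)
The plan is to isolate a single structural fact and read both implications off from it: \emph{in any compact ultrametric space $(V(T), d_l)$ generated by a non-degenerate labeling, the accumulation points are exactly the vertices of infinite degree.} Granting this, the number of accumulation points of a compact $(V(T), d_l)$ equals the cardinality of $\{v \in V(T) \colon {\rm deg}_T(v) = \infty\}$ and is therefore \emph{the same for every} labeling generating a compact $d_l$; the equivalence then amounts to comparing this number with $1$.

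First I would prove the structural fact in two steps. The inclusion ``accumulation point $\Rightarrow$ infinite degree'' needs no compactness: since the path $P[u,v]$ in \eqref{e1.1} always contains the vertex $u$, we have $d_l(u,v) \geqslant l(u)$, so every $p$ with $l(p) > 0$ is isolated; and if ${\rm deg}_T(p) < \infty$ with $l(p) = 0$, then Theorem~\ref{t1.4} forces $l(w) > 0$ for each of the finitely many neighbours $w$ of $p$, whence $d_l(p,x) \geqslant \min_{w} l(w) > 0$ for all $x \neq p$, and $p$ is again isolated. For the reverse inclusion I would use compactness through Proposition~\ref{pr_7}: if ${\rm deg}_T(v) = \infty$ with neighbours $w_1, w_2, \ldots$, then $d_l(w_i, w_j) = \max\{l(w_i), l(v), l(w_j)\} \geqslant l(v)$, so total boundedness forces $l(v) = 0$; consequently $d_l(v, w_i) = l(w_i)$, and since $(w_i)_{i\in\NN}$ has a Cauchy subsequence we get $\liminf_i l(w_i) = 0$, so some $w_{i_k} \to v$ and $v$ is an accumulation point.

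With the structural fact in hand, the implication $(ii) \Rightarrow (i)$ is immediate: a single vertex of infinite degree yields exactly one accumulation point in every compact $(V(T), d_l)$. For $(i) \Rightarrow (ii)$ I would argue by contraposition. If $T$ has no vertex of infinite degree, then by Lemma~\ref{lem2.4} and raylessness $T$ is finite, so every non-degenerate $l$ gives a finite (hence compact) discrete space with zero accumulation points, violating $(i)$. If $T$ has at least two vertices of infinite degree, it suffices to produce \emph{one} labeling generating a compact $d_l$, since the structural fact then furnishes at least two accumulation points and $(i)$ fails; the natural attempt sets $l \equiv 0$ on the infinite-degree vertices (forced by the structural fact) and lets the labels of the remaining finite-degree vertices decay to $0$ along the tree so as to secure total boundedness via Proposition~\ref{pr_7}.

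The main obstacle is precisely this last existence step, and it is genuinely delicate. The requirement $l(v)=0$ on every infinite-degree vertex is compatible with non-degeneracy only when no two infinite-degree vertices are adjacent, since an edge joining two such vertices would have both endpoints labelled $0$ and violate Theorem~\ref{t1.4}. For trees whose infinite-degree vertices form an independent set I expect the decaying construction to succeed and the conjecture to follow; but when two infinite-degree vertices are adjacent — the simplest case being the ``double star'' with two centres each carrying infinitely many leaves — \emph{no} non-degenerate labeling can be compact, because one centre must keep a positive label and its infinitely many leaves are then uniformly separated, so $(i)$ holds vacuously while $(ii)$ fails. I would therefore treat the exact description of which rayless countable trees admit a compact labeling as the crux of the argument, and I would expect the clean form of the statement to require, in addition, that the infinite-degree vertices be pairwise non-adjacent.
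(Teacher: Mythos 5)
The first thing to say is that the paper contains \emph{no proof} of this statement: it is Conjecture~\ref{con5.2}, posed as open in Section~\ref{sec5}, so there is no argument of the authors to compare yours against. Judged on its own terms, your structural lemma is correct and cleanly proved: by \eqref{e1.1} every vertex with positive label is isolated; every finite-degree vertex with label $0$ is isolated because non-degeneracy (Theorem~\ref{t1.4}) puts positive labels on its finitely many neighbours; and conversely total boundedness (via Proposition~\ref{pr_7}) forces $l(v)=0$ at each infinite-degree vertex $v$ and then yields neighbours $w_{i_k}$ with $l(w_{i_k})\to 0$, so that in any compact $(V(T),d_l)$ the accumulation points are exactly the infinite-degree vertices. (Note you only ever use total boundedness, not full compactness.) This settles $(ii)\Rightarrow(i)$, and your disposal of the case with no infinite-degree vertex via Lemma~\ref{lem2.4} is also fine.

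Two genuine gaps remain, and you have honestly named both. First, the existence step --- that every rayless countable tree whose infinite-degree vertices form an independent set of size at least two admits \emph{some} labeling generating a compact $d_l$ --- is only sketched (``decaying labels''); making it precise is nontrivial, since compactness also requires completeness and a controlled hierarchy of label-$0$ vertices, and would plausibly need an induction on the rank of rayless trees rather than a one-shot formula. Second, and more significantly, your double-star observation is correct: if two infinite-degree vertices are adjacent, your structural lemma forces both labels to be $0$, contradicting non-degeneracy, so no labeling generates a compact $d_l$ at all; then $(i)$ holds vacuously while $(ii)$ fails. Under the literal universally-quantified reading of $(i)$, this \emph{refutes} the stated equivalence; it also shows that raylessness, which the paper quotes from Theorem~7 of \cite{DK2022AC} as necessary for the existence of a compact labeling, is not sufficient. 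So your proposal should be read not as a proof but as substantial progress on the open problem: a correct characterization of accumulation points, a correct half of the equivalence, and a concrete argument that Conjecture~\ref{con5.2} needs either a non-vacuity clause (the tree admits at least one compact labeling) or the added hypothesis that the infinite-degree vertices are pairwise non-adjacent --- exactly the amendment you propose, and one worth communicating to the authors.
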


It seems to be interesting to get a purely metric characterization of compact ultrametric spaces generated by labeled trees.

}

\section*{Conflict of interest statement}

\cor{1}{The research was conducted in the absence of any commercial or financial relationships that could be construed as a potential conflict of interest.}

\section*{Funding}

\cor{1}{The first author was supported by grant 359772 of the Academy of Finland.}

\section*{Acknowledgement}
The authors express their gratitude to anonymous referees whose non-trivial remarks strongly helped us in the preparing of the final version of this paper.

%\bibliographystyle{myamsplain}
%\bibliography{bib2020.12}

\end{document}